\newcommand{\cF} { {\mathcal{F}}}
\newcommand{\cR} { {\mathcal{R}}}
\newcommand{\cZ} { {\mathcal{Z}}}
\newcommand{\bfu} { {\mathbf{u}}}
\newcommand{\bfa} { {\mathbf{a}}}
\newcommand{\bfc} { {\mathbf{c}}}
\newcommand{\bfm} { {\mathbf{m}}}
\newcommand{\bZ} { {\mathbb{Z}}}
\newcommand{\GL} { {\rm GL}}
\newcommand{\trdeg} { {\rm tr.deg}}
\newtheorem{theorem}{Theorem}
\newtheorem{thm}[theorem]{Theorem}
\newtheorem{corollary}[theorem]{Corollary}
\newtheorem{lemma}[theorem]{Lemma}
\newtheorem{remark}[theorem]{Remark}
\newtheorem{algorithm}[theorem]{Algorithm}
\newtheorem{problem}[theorem]{Problem}
\newtheorem{defi}[theorem]{Definition}
\newtheorem{example}[theorem]{Example}
\newtheorem{hypo}[theorem]{Hypothesis}
\newcommand{\myitem}[1]{%
\item[(#1)]\protected@edef\@currentlabel{#1}%
}
\def\eatspace#1{#1}
\def\step#1#2{\par\kern1pt\hangindent#2em\hangafter=1\noindent\rlap{\small#1}\kern#2em\relax\eatspace}
\let\set\mathbb
\def\<#1>{\langle#1\rangle}
\def\lcm{\operatorname{lcm}}
\def\id{\operatorname{id}}
\def\disp{\operatorname{disp}}
\def\diag{\operatorname{diag}}
\def\si{\sigma}
\begin{document}
\fancyhead{}
\title{Parallel Summation in P-Recursive Extensions}
\thanks{S.\ Chen was partially supported by the National Key R\&D Program of China (No. 2023YFA1009401),
the NSFC grant (No. 12271511), CAS Project for Young Scientists in Basic Research (Grant
No. YSBR-034), and the CAS Fund of the Youth Innovation Promotion Association (No. Y2022001). R. Feng was partially supported by the National Key R\&D Program of China (No. 2023YFA1009401) and the National Key Research and Development Project 2020YFA0712300.
M.\ Kauers was supported by the Austrian FWF grants PAT 9952223 and I6130-N.
X.\ Li was partially supported by the Land Oberösterreich through the LIT-AI Lab
}

\author[S.\ Chen, R.\ Feng, M.\ Kauers, X.\ Li]{Shaoshi Chen$^{a, b}$, Ruyong Feng$^{a, b}$, Manuel Kauers$^c$, Xiuyun Li$^{a, b, c}$}

\affiliation{%
  \institution{$^a$KLMM, Academy of Mathematics and Systems Science, Chinese Academy of Sciences, Beijing 100190, China}
  \institution{$^b$School of Mathematical Sciences, University of Chinese Academy of Sciences, Beijing 100049, China}
  \institution{$^c$Institute for Algebra, Johannes Kepler University, Linz, A4040,  Austria}
  \state{}
  \postcode{}
  \country{}
}
\email{schen@amss.ac.cn,ryfeng@amss.ac.cn,manuel.kauers@jku.at,lixiuyun@amss.ac.cn}

\begin{abstract}
  We propose investigating a summation analog of the paradigm for parallel integration. 
  We make some first steps towards an indefinite summation method applicable to summands
  that rationally depend on the summation index and a
  P-recursive sequence and its shifts. There is a distinction between so-called normal and so-called special polynomials. Under the assumption that the
  corresponding difference field has no unnatural constants, we are able to
  predict the normal polynomials appearing in the denominator of a potential closed
  form. We can also handle the numerator. Our method is incomplete so far
  as we cannot predict the special polynomials appearing in the denominator. However, we do have
  some structural results about special polynomials for the setting under
  consideration.
\end{abstract}
\begin{CCSXML}
	<ccs2012>
	<concept>
	<concept_id>10010147.10010148.10010149.10010150</concept_id>
	<concept_desc>Computing methodologies~Algebraic algorithms</concept_desc>
	<concept_significance>500</concept_significance>
	</concept>
	</ccs2012>
\end{CCSXML}

\ccsdesc[500]{Computing methodologies~Algebraic algorithms}

\keywords{symbolic summation; difference rings}
\maketitle

\section{Introduction}

The main difference between the first and the second edition of Manuel Bronstein's classical
textbook on symbolic integration~\cite{BronsteinBook} is an additional tenth chapter about parallel
integration, which is based on his last paper~\cite{Bronstein2007} on the subject.
Parallel integration is an alternative approach to the more widely known Risch algorithm
for indefinite integration, whose careful
description dominates the remainder of Bronstein's book.
Parallel integration is also known as the
Risch-Norman algorithm~\cite{Norman1977, Harrington1979, Fitch1981, Davenport1982a, Davenport1982b, DavenportTrager1985, GeddesStefanus1989} and
as poorman's integrator~\cite{pmint}.

Although the technique is not complete, i.e., it fails to find a closed form of certain
integrals, it is an attractive alternative to a full implementation of the Risch algorithm,
which is guaranteed to find a closed form whenever there is one. One advantage is that it
is much easier to program. Indeed, Bronstein's Maple implementation~\cite{pmint} barely needs
100 lines of code. A second advantage is that it extends more easily to integrals of
non-elementary functions. For example, it can find the evaluation
\[
  \int\frac{x^2+(x^2+2)W(x^2)}{x(1+W(x^2))^2}dx=\frac12\frac{x^2}{W(x^2)}+\log(1+W(x^2))
\]
involving the Lambert $W$ function~\cite{LambertW}. This is not only interesting because $W$
is defined by a nonlinear equation, but also because there is a factor in the denominator of
the closed form that is not already present in the integrand.

In a seminar talk that never led to a formal publication, Zimmermann observed that parallel integration can be combined with the
concept of creative telescoping~\cite{Zeilberger1991} in order to handle definite integrals
involving a parameter, similar as done by Raab~\cite{Raab2013} with Risch's algorithm.
A version of parallel integration for integrals involving algebraic
functions was presented by B\"ottner in~\cite{Boettner2010} and for integrals of Airy functions by Du and Raab in~\cite{DuRaab2023}.

To our knowledge, the idea of parallel integration has not yet been translated to the
setting of symbolic summation. The goal of the present paper is to do so. A summation example
that is similar to the above integral can be given in terms of the logistic sequence $t_n$~\cite[Example 1.9, Chapter 1]{Elaydi2005}
satisfying the nonlinear recurrence equation $t_{n+1} = t_n (1-t_n)$ with $t_0 \in (0, 1)$. Here
we have the summation identity
\[
\sum_{k=0}^{n-1}\frac{1}{1-t_k} = \sum_{k=0}^{n-1} \left(\frac{1}{t_{k+1}} - \frac{1}{t_{k}}\right) = \frac{1}{t_{n}} - \frac{1}{t_0},
\]
and again, the denominator of the closed form contains a factor that is not already present in the
summand. 

On the other hand, the denominator of a closed form is not completely unpredictable.
Like in parallel integration, we can distinguish the \emph{special} and the \emph{normal}
part of a denominator. Based on this distinction, in Section~\ref{SECT:general} we show 
how the normal part of the denominator of a closed form depends on the normal part of
the denominator of the corresponding summand. Unfortunately, we do not have a complete
understanding of the special part, but we do have some results that limit the number of
special polynomials (Sect.~\ref{sec:numberirredspecial}). More can be said if we focus
on a more specific setting.

Sect.~\ref{SECT:general} is about the general paradigm of parallel summation, which
in principle could be applied to many different specific settings. In Sect.~\ref{sec:p-finite} we
restrict the attention to one such setting. We consider summation problems of the form
\[
  \sum_{k=0}^n\operatorname{rat}(k,f(k),f(k+1),\dots,f(k+r-1)),
\]
where $\operatorname{rat}$ is a multivariate rational function and $f$ is defined by a
linear recurrence of order~$r$ with polynomial coefficients. The task is to decide
whether a given sum of this type can be written as a rational function in $n,f(n),\dots,f(n+r-1)$.
While we are not (yet) able to solve this task in full generality, the idea of parallel
summation provides a significant step towards such an algorithm. We can describe more
precisely the structure of special polynomials in this case (Sect.~\ref{SUBSECT:structureofspecialpolynomials}), and we can
effectively solve the $\sigma$-equivalence problem (Sect.~\ref{SUBSECT:equivalence}), which implies that
we can completely identify the normal part of the denominator of any closed form. Similar
as in the differential case~\cite{BronsteinBook, Bronstein2007}, the special part has to
be determined heuristically, unless we impose further restrictions on the setting (Sect.~\ref{sec:cfinite}).

\section{Parallel summation}\label{SECT:general}

Similar to parallel integration, the general idea of parallel summation is to avoid the recursive nature of
summation algorithms such as Karr's algorithm by viewing the summand as an element of a field of multivariate rational functions
over a ground field. We now set up the general algebraic foundation for parallel summation and list some related problems.
Like in the situation of parallel integration, these problems are in general far from being solved.

Let $A$ be a ring and $\sigma\colon A \rightarrow A$ be an automorphism of~$A$. We call the pair $(A, \sigma)$
a \emph{difference ring} and a \emph{difference field} if $A$ is a field. Note that the set $\{a\in A \mid \sigma(a)=a\}$ forms a subring of $A$ which is called
the \emph{constant subring} of $(A, \sigma)$, denoted by $C_A$.
A difference ring $(A^*, \sigma^*)$ is called a \emph{difference extension} of $(A, \sigma)$ if $A\subseteq A^*$ and $\sigma^*\mid_A= \sigma$.
By abuse of notation, we will often write $\sigma$ for the extended automorphism $\sigma^*$ of $A^*$.

\begin{problem} [Indefinite Summation Problem]
  Let $(A^*, \sigma)$ be a specific difference extension of $(A, \sigma)$. Given $f\in A$, decide whether there exists $g\in A^*$
  such that $f = \sigma(g)- g$.  If such a $g$ exists, $f$ is said to be \emph{summable} in $A^*$.
\end{problem}

Abramov's algorithm~\cite{Abramov1971, Abramov1975, Abramov1995b} solves the indefinite summation problem
for rational functions. The indefinite hypergeometric summation problem was solved by Gosper's algorithm in~\cite{Gosper1978} and the more general P-recursive case without denominators was
solved by Abramov-van Hoeij's algorithm~\cite{AbramovHoeij1997,AbramovHoeij1999}.
As a discrete analogue of Risch's algorithm for elementary integration, Karr's algorithm~\cite{Karr1981, Karr1985} solves the indefinite summation problem
in a so-called $\Pi\Sigma$-extension of a given difference field. Karr's algorithm has been implemented and improved by Schneider in~\cite{schneider01, schneider08,Schneider2016}
with applications in physics~\cite{schneider16}.
The ideas of Karr have also been extended to higher order equations~\cite{HendriksSinger1999,bronstein00,schneider04c,schneider05c}.

The difference fields employed in Karr's algorithm are univariate rational function fields $K(t)$ with potentially complicated
ground fields~$K$. The summation problem is solved in such fields following Abramov's two-step approach: first find a $v\in K[t]$
such that every solution $g\in K(t)$ must have a denominator that divides $v$, and then find a $u\in K[t]$ such that $g=u/v$ is
a solution. In order to find such $u$ and~$v$, certain subproblems have to be solved for the ground field~$K$, and if $K$ is again
a univariate rational function field (perhaps still with a complicated ground field), then the procedure is applied recursively.
The recursion ends when the constant field is reached.

Parallel summation also follows Abramov's two-step approach, but instead of using univariate rational function fields with
potentially complicated ground fields, it allows the use of multivariate rational function fields, which may then have simpler
ground fields. The summand is thus given as an element of a difference field of the form $F=K(t_0,\dots,t_{n-1})$. In the first step,
we seek a $v\in K[t_0,\dots,t_{n-1}]$ such that every solution $g\in F$ must have a denominator that divides~$v$, and in the second
step, we then find a polynomial $u\in K[t_0,\dots,t_{n-1}]$ such that $g=u/v$ is a solution. 

Thus, while Karr's algorithm handles the generators of a difference field $K(t_0)(t_1)\cdots(t_{n-1})$ one after the other,
the parallel approach handles them ``in parallel''. How exactly this is done, this depends on the automorphism $\sigma$
of the difference field. There is little hope to obtain an algorithm that can execute both steps for arbitrary
difference fields of the form $K(t_0,\dots,t_{n-1})$. This is the same as in the integration case, where it can only be guaranteed
under strong restrictions on the differential field that the method does not overlook any solutions, and where
the method is still of interest as a valuable heuristic tool in situations where these strong restrictions do not apply.

In this paper, we restrict the attention to the ground field $K=C(x)$ where $C$ is a field of characteristic
zero. Together with the $C$-automorphism $\sigma\colon K\to\ K$ defined by $\sigma(x)=x+1$, we have that $(K, \sigma)$
is a difference field and its constant subfield is~$C$.
Let $R := K[t_0, \ldots, t_{n-1}]$ and $F := K(t_0, \ldots, t_{n-1})$.
The central problem of parallel summation is as follows.

\begin{problem}\label{Problem:parasum}
Given $f \in F$, decide whether there exists $g\in F$
  such that $f = \sigma(g)- g$.
\end{problem}

For a general $C$-automorphism $\sigma$ of~$F$,
the following example shows that the difference field $(F, \sigma)$ may contain new constants that are not in~$C$.

\begin{example}
Let $F = C(x, t_0, t_1)$ with the $C$-automorphism $\sigma$ satisfying $\sigma(x) = x+1 $, $\sigma(t_0) = t_1$, and $\sigma(t_1) = t_0 + t_1$.
Then $p = (t_1^2 - t_0^2 - t_0t_1)^2$ is a new constant in $F$.
\end{example}
In general, deciding the existence of new constants is a difficult problem. 
It is equivalent to finding algebraic relations among sequences. See~\cite{Kauers08}
for how to do this for C-finite sequences and Karr's algorithm~\cite{Karr81} for
how to solve it for $\Pi\Sigma$-fields.

The following example shows that in general, the denominator of $g$ may have
factors that are not related to any of the factors of the denominator of~$f$.

\begin{example}
\label{Exam:strange}
  Let $F = C(x, t_0, t_1)$ and $\si$ is the $C$-automorphism defined by $\si(x) = x+1$, $\si(t_0)=2t_0 + xt_1$ and $\si(t_1) = 2t_1$. Then
  \[\si\left(\frac{t_0}{t_1}\right) - \frac{t_0}{t_1}  = \frac{2t_0 + xt_1}{2t_1} - \frac{t_0}{t_1} =  \frac{x}{2}.\]
\end{example}

Similar phenomena can be expected if $\si(t_i)$ is not a polynomial for some~$i$. We are interested in predicting denominators of closed forms when $\si(t_i)$ is a polynomial for every~$i$. This motivates the following hypothesis.
\begin{hypo}\label{hypo}
The constant field of $(F, \sigma)$ is the field $C$ and $\sigma$ is also a $C$-automorphism of $R$.
\end{hypo}

To solve Problem~\ref{Problem:parasum}, one first needs to estimate the possible irreducible polynomials in the denominator of~$g$. To this end, we now extend the notion of special polynomials in parallel integration to the summation setting.
\begin{defi}\label{DEF:sepcial}
A polynomial $P \in R$ is said to be \emph{special} if there exist $i\in \bZ\setminus \{0\}$ such that
$P\mid \sigma^i(P)$ and it is said to be \emph{normal} if $\gcd(P, \sigma^i(P)) = 1$ for all $i\in \bZ\setminus \{0\}$.
A polynomial $P \in R$ is said to be \emph{factor-normal} if all of its irreducible factors are normal.
Two polynomials $P, Q\in R$ are said to be $\sigma$-equivalent if there exist $m\in \bZ$ and $u\in K$ such that
$P = u\cdot \sigma^m(Q)$.
\end{defi}
By the above definition, any nonzero element in $K$ is both special and normal and an irreducible polynomial in $R$
is either special or normal. The product of special polynomials is also special. If two normal polynomials are not
$\sigma$-equivalent, then their product is still normal.

Concerning special and normal polynomials, there are two basic and natural questions: firstly, how to decide if a given irreducible polynomial is special or normal?
Secondly, how to decide whether two polynomials are $\sigma$-equivalent or not?
We will answer these questions in next section for the difference field generated by P-recursive sequences.

\subsection{Local dispersions and denominator bounds}\label{SubSECT:disp}
Abramov in~\cite{Abramov1971} introduced the notion of dispersions for rational summation. It is a discrete analogue of the multiplicity.
We define a local version of Abramov's dispersions in $R$ at an irreducible normal polynomial, following~\cite{chen2021ISSACb}.
Let $p, Q\in R$ with $p$ being an irreducible normal polynomial. If $\sigma^i(p) \mid Q$ for some $i\in \bZ$, the \emph{local dispersion} of $Q$ at~$p$,
denoted by $\disp_p(Q)$, is defined as
the maximal integer distance $|i-j|$ with $i, j\in \bZ$ satisfying $\si^i(p)\mid Q$ and $\si^j(p)\mid Q$; otherwise we define $\disp_p(Q)=-\infty$.
Conventionally, we set $\disp_p(0) = +\infty$.
The (global) \emph{dispersion} of $Q$, denoted by $\disp(Q)$, is defined as
\[ \max \{\disp_p(Q)\mid \text{$p$ is an irreducible normal polynomial in $R$}\}.\]
Note that $\disp(Q) = -\infty$ if $Q\in R\setminus\{0\}$ has no irreducible normal factor.
 For a rational function $f = a/b\in F$ with $a, b\in R$, $\deg(b)\geq 1$ and $\gcd(a, b)=1$, we also define $\disp_p(f) = \disp_p(b)$ and $\disp(f) = \disp(b)$.
The set $\{\sigma^i(p)\mid i\in \bZ\}$ is called the $\sigma$-orbit at $p$, denoted by $[p]_{\sigma}$.
Note that $\disp_p(Q) = \disp_q(Q)$ if $q \in [p]_{\sigma}$. So we can define the local dispersion and dispersion of a rational function
at a $\sigma$-orbit.

The following lemma shows how the local dispersions and dispersions change under the action of the difference operator $\Delta$, which is
defined by $\Delta(f) = \si(f) -f$ for any $f\in F$.

\begin{lemma}\label{LEM:dis}
Let $f=a/b \in F$ with $a, b\in R$ and $\gcd(a, b)=1$ and let $p \in R$ be an irreducible normal factor of~$b$.
Then $\disp_p(\Delta(f)) = \disp_p(f) + 1$ and $\disp(\Delta(f)) = \disp(f) + 1$.
\end{lemma}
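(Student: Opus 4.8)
The plan is to write $\Delta(f)$ over the common denominator $\sigma(b)\,b$ and to control, member by member, which elements of the $\sigma$-orbit $[p]_\sigma$ survive cancellation. Concretely I would set $N = \sigma(a)\,b - a\,\sigma(b)$, so that $\Delta(f) = N/(\sigma(b)\,b)$, and argue through the $q$-adic valuation $\val_q$ on $F$ for the irreducible polynomials $q\in[p]_\sigma$. By Hypothesis~\ref{hypo} the map $\sigma$ is an automorphism of $R$, so it permutes irreducible polynomials, preserves gcds, and satisfies $\val_q(\sigma(h)) = \val_{\sigma^{-1}(q)}(h)$ for every $h\in F$; in particular each $\sigma^i(p)$ is again irreducible and normal, and the members of $[p]_\sigma$ are pairwise non-associate. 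Writing $S = \{\,i\in\bZ : \sigma^i(p)\mid b\,\}$, the normality of $p$ gives $\disp_p(f) = \max S - \min S$, and $0\in S$ because $p\mid b$. Since $\sigma^i(p)$ divides the unreduced denominator $\sigma(b)\,b$ exactly when $i\in S$ or $i-1\in S$, every member of $[p]_\sigma$ surviving in the reduced denominator of $\Delta(f)$ has index in the interval $[\min S,\ \max S+1]$, which already yields the upper bound $\disp_p(\Delta(f)) \le \disp_p(f)+1$.

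The heart of the argument, and the step I expect to be the main obstacle, is the matching lower bound: I must show that the two extreme indices are not lost to cancellation with $N$. Consider the upper extreme $q_+ = \sigma^{\max S+1}(p)$. Here $\val_{q_+}(b)=0$ while $\val_{q_+}(\sigma(b)) = \val_{\sigma^{\max S}(p)}(b) \ge 1$; moreover $\gcd(a,b)=1$ together with $\sigma^{\max S}(p)\mid b$ forces $\val_{q_+}(\sigma(a)) = \val_{\sigma^{\max S}(p)}(a) = 0$. Thus in $N = \sigma(a)\,b - a\,\sigma(b)$ the first summand has valuation $0$ and the second has valuation $\ge 1$, so these differ, $\val_{q_+}(N)=0$, and $\val_{q_+}(\Delta(f)) = -\val_{q_+}(\sigma(b)) < 0$. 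A symmetric computation at the lower extreme $q_- = \sigma^{\min S}(p)$, where now $\val_{q_-}(\sigma(b))=0$ and $\val_{q_-}(b)\ge 1$ while $\val_{q_-}(a)=0$, gives $\val_{q_-}(\Delta(f)) < 0$ as well. Hence both $q_-$ and $q_+$ occur in the reduced denominator, their indices span $(\max S+1)-\min S$, and therefore $\disp_p(\Delta(f)) = \disp_p(f)+1$.

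For the global equality I would switch from individual base points to $\sigma$-orbits, using the orbit-invariance of the local dispersion recorded just before the lemma, so that $\disp(\cdot)$ is a maximum of local dispersions taken over the finitely many normal orbits present. Every irreducible normal factor of the reduced denominator of $\Delta(f)$ divides $\sigma(b)\,b$ and hence lies in an orbit already occurring in $b$, so $\Delta$ creates no new normal orbit; conversely the extreme-index computation shows that each normal orbit of $b$ persists, with its local dispersion raised by exactly $1$. Since the maximum is therefore taken over the same index set before and after applying $\Delta$, I conclude $\disp(\Delta(f)) = \disp(b)+1 = \disp(f)+1$.
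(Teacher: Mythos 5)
Your proof is correct and follows essentially the same route as the paper's: both get the upper bound from the unreduced denominator $b\,\sigma(b)$ and prove the matching lower bound by showing the two extreme elements of the orbit survive cancellation in $\sigma(a)b - a\sigma(b)$, because coprimality of $a$ and $b$ makes exactly one of the two terms divisible by each extreme factor. Your valuation bookkeeping with the index set $S$ is just a normalization-free rephrasing of the paper's ``without loss of generality'' assumption that the orbit indices occurring in $b$ run from $0$ to $d$.
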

\begin{proof}
Let $d = \disp_p(b)$.  Without loss of generality, we may assume that $p\mid b$ but $\si^i(p)\nmid b$ for any $i<0$.
Since $\gcd(a, b)=1$ and $\si$ is a $C$-automorphism of $K[t_0, \ldots, t_{n-1}]$,  $\gcd(\si^i(a), \si^i(b))=1$ for any $i \in \bZ$.
We now write
\[ \sigma(f) - f = \frac{\si(a)b - a\si(b)}{b\si(b)} = \frac{A}{B}, \]
where $A, B \in K[t_0, \ldots, t_{n-1}]$ and $\gcd(A, B)=1$.
Since $p\mid b$ but $p\nmid a\si(b)$, we have $p \nmid (\si(a)b - a\si(b))$ and then $p \nmid A$.
By the definition of local dispersions, $\si^d(p)\mid b$ but $\si^{d+1}(p)\nmid b$. Since $\gcd(a, b)=1$, we have $\si^{d}(p)\nmid a$ and then $\si^{d+1}(p)\nmid \si(a)$.
Then $\si^{d+1}(p) \nmid \si(a)b$, which implies $\si^{d+1}(p) \nmid (\si(a)b - a\si(b))$ and also $\si^{d+1}(p) \nmid A$.
So $p\mid B$ and $\si^{d + 1}(p)\mid B$, which implies that $\disp_p(B)\geq d+1$. Since $B\mid b\si(b)$, we have
 $\disp_p(B)\leq\disp_p(b\si(b))= d+1$. Therefore, $\disp_p(B) = d+1$. Since the equality $\disp_p(B) = d+1$ holds for all irreducible normal factors,
 we have $\disp(\Delta(f)) = \disp(f) + 1$.
\end{proof}
By the above lemma, we get that $f$ is not $\si$-summable in $F$ if $\disp(f) = 0$. 
If we know how to detect the $\si$-equivalence in $R$, then we can
write a given polynomial $P\in R$ as $ P = P_s \cdot P_n$, all irreducible factors of $P_s \in R$ are special
and all irreducible factors of $P_n\in R$ are normal. We call $(P_s, P_n)$ the \emph{splitting factorization} of $P$ and  $ P_n$
the \emph{normal part} of $P$.

 \begin{thm}\label{THM:normaldenom}
 Let $f\in F$ and $v_n \in R$ be the normal part of the denominator of~$f$.
 If $f = \si(g) - g$ for some $g\in F$, then the normal part of the denominator of $g$ divides the polynomial
 \[\gcd\left(\prod_{i=0}^{d}\sigma^{i}(v_n),\prod_{i=0}^{d}\sigma^{-i-1}(v_n )\right),\]
 where $d := \disp(g) = \disp(f)-1$.
 \end{thm}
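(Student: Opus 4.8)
The plan is to reduce the divisibility assertion to a purely local statement, one $\sigma$-orbit of an irreducible normal polynomial at a time, and then to a pair of numerical inequalities on valuations. Fix an irreducible normal $p$ and set $e_i := \val_{\sigma^i(p)}(g)$ and $\phi_i := \val_{\sigma^i(p)}(f)$ for $i\in\bZ$, where $\val_q$ is the $q$-adic valuation on $F$. Writing $\mu_i := \max(0,-e_i)$ and $\nu_i := \max(0,-\phi_i)$ for the multiplicities of $\sigma^i(p)$ in the (normal parts of the) denominators of $g$ and $f$, a direct computation of the $\sigma^l(p)$-valuation of the two products shows that
\[
  \val_{\sigma^l(p)}\Bigl(\prod_{i=0}^{d}\sigma^{i}(v_n)\Bigr)=\sum_{k=l-d}^{l}\nu_k,\qquad \val_{\sigma^l(p)}\Bigl(\prod_{i=0}^{d}\sigma^{-i-1}(v_n)\Bigr)=\sum_{k=l+1}^{l+d+1}\nu_k,
\]
so the valuation of the gcd at $\sigma^l(p)$ is the minimum of these two sums. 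Since $d=\disp(g)=\disp(f)-1$ by Lemma~\ref{LEM:dis} and the local dispersion at $[p]_\sigma$ is at most $d$, and since $w_n$ is supported on normal orbits, it suffices to prove, for every irreducible normal $p$ and every $l$, the two window inequalities $\mu_l\le\sum_{k=l-d}^{l}\nu_k$ and $\mu_l\le\sum_{k=l+1}^{l+d+1}\nu_k$.

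Next I would pin down how $\phi$ is controlled by $e$. Because $\sigma$ is a $C$-automorphism of $R$ (Hypothesis~\ref{hypo}), it preserves irreducibility and shifts valuations, so $\val_{\sigma^i(p)}(\sigma(g))=e_{i-1}$. As $\val$ is a valuation, this gives $\phi_i=\min(e_{i-1},e_i)$ whenever $e_{i-1}\neq e_i$, and only $\phi_i\ge e_i$ when $e_{i-1}=e_i$ (possible cancellation of leading terms). In particular, at the two indices where the sequence $(e_i)$ enters and leaves its negative region there is no cancellation, so $\nu$ there equals exactly the magnitude of the neighbouring $e$. Writing $m:=\min\{i:e_i<0\}$ and $M:=\max\{i:e_i<0\}$, one records $M-m\le d$.

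The core step is the telescoping estimate, carried out for the first inequality and then mirrored. Assume $\mu_l>0$, so $m\le l\le M$, and let $s$ be the largest index $\le l$ with $e_s\ge 0$; then $e_{s+1},\dots,e_l$ are all negative and $s+1\ge l-d$, so $[s+1,l]\subseteq[l-d,l]$. On the all-negative range $[s+2,l]$ the relation above yields $\nu_k\ge e_{k-1}-e_k$, while the boundary index contributes exactly $\nu_{s+1}=-e_{s+1}$; adding these and telescoping $\sum_{k=s+2}^{l}(e_{k-1}-e_k)=e_{s+1}-e_l$ gives $\sum_{k=s+1}^{l}\nu_k\ge(e_{s+1}-e_l)+(-e_{s+1})=-e_l=\mu_l$, hence the first inequality. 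The second follows symmetrically, using the smallest index $s'\ge l$ with $e_{s'}\ge 0$, for which $s'\le l+d+1$ and $\nu_{s'}=-e_{s'-1}$.

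The one genuinely delicate point, and where I would be most careful, is the cancellation case $e_{i-1}=e_i$: there $\phi_i$ may exceed $e_i$, so $\nu_i$ can under-represent the orbit and the naive identity $\nu_i=\mu_i$ fails. The telescoping is arranged so that cancellation can only help, since it contributes $\nu_k\ge 0=e_{k-1}-e_k$, while the actual mass $\mu_l$ is recovered from the forced sign change at $s$ (resp.\ $s'$) together with the bound $M-m\le d$, which keeps the relevant indices inside the window. I would also double-check the off-by-one bookkeeping relating the shift ranges $\sigma^{i}$ and $\sigma^{-i-1}$ for $0\le i\le d$ to the two windows $[l-d,l]$ and $[l+1,l+d+1]$, since this is exactly what makes the gcd come out as stated.
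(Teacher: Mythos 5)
Your proof is correct, but it follows a genuinely different route from the paper's. The paper argues globally: writing $f=u/v$ and $g=p/q$ in lowest terms, it iterates the identity $g=(v\sigma(g)-u)/v$ a total of $d$ times to exhibit $g$ as a fraction whose denominator divides $v\sigma(v)\cdots\sigma^{d}(v)\,\sigma^{d+1}(q)$; passing to normal parts and using that $\disp(q_n)=d$ forces $\gcd(q_n,\sigma^{d+1}(q_n))=1$, the unknown factor $\sigma^{d+1}(q_n)$ drops out, giving $q_n\mid\prod_{i=0}^{d}\sigma^{i}(v_n)$, and the mirror divisibility comes from iterating in the opposite direction. You instead localize at one orbit $[p]_\sigma$ at a time and prove two window inequalities on multiplicities, using the ultrametric inequality for $\sigma(g)-g$ together with a telescoping sum anchored at the sign changes of the valuation sequence $(e_i)$ adjacent to the negative run containing $l$. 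The paper's unrolling is shorter and requires no case analysis of cancellation (the coprimality trick absorbs it); your local argument is finer-grained: it bounds the multiplicity of each individual $\sigma^l(p)$ by a window sum of multiplicities coming from $v_n$, and the window width you actually need is only the local dispersion of $g$ at $[p]_\sigma$ rather than the global $d$, so you in fact obtain a slightly sharper, Abramov-style universal-denominator statement than the one claimed. Minor points to repair in the write-up: the ``$w_n$'' in your first paragraph should be the normal part of the denominator of $g$; the existence of the anchor indices $s$ and $s'$ deserves a sentence (only finitely many $\sigma^i(p)$ divide the denominator of $g$, so $e_i=0$ for $|i|$ large); and the set $\{i\mid e_i<0\}$ need not be a single interval, which is harmless since your telescoping only uses the boundary of the negative run containing $l$, while $M-m\le d$ still confines that run to the required windows.
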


\begin{proof}
Write $f= u/v \in F$ with $u,v \in R$ and $gcd(u,v)=1$. Assume that the splitting factorization of $v$ is $(v_s, v_n)\in R^2$.
If $f = \si(g)-g$ for some $g\in F$, we also write $g = p/q$ with $p,q\in R$ and $gcd(p,q)=1$ and let $(q_s, q_n)$ be the splitting factorization of~$q$.
By Lemma~\ref{LEM:dis}, we have $d:= \disp(q_n) = \disp(v_n) -1$.  We now show
\begin{equation}\label{EQ:div}
q_n \mid\gcd\left(\prod_{i=0}^{d}\sigma^{i}(v_n),\prod_{i=0}^{d}\sigma^{-i-1}(v_n )\right).
\end{equation}
We first show that $q_n\mid \prod_{i=0}^{d}\sigma^{i}(v_n)$.
The equality $f = \sigma(g)-g$ implies that
  \begin{equation}\label{eq:resf}
  g=\frac{v\sigma(g)-u}{v}
  \end{equation}
 Applying $\sigma$ to both sides of the above equation yields
 \[\sigma(g)=\frac{\sigma(v)\sigma^2(g)-\sigma(u)}{\sigma(v)}.\]
Substituting $\sigma(g)$ in the equation~\eqref{eq:resf} yields
 \[ g =\frac{1}{v}\left(v\cdot\frac{\sigma(v)\sigma^2(g)-\sigma(u)}{\sigma(v)}-u\right)\]
After $d$ repetitions of the above process, we get
  \[ g =\frac{a\cdot \sigma^{d+1}(g)-b}{v\sigma(v) \cdots \sigma^d(v)}\]
for some $a,b \in R$. The denominator of the $g$ is $q=q_sq_n$, while the denominator of the right-hand side of the above equality
is a divisor of $V := v\sigma(v)\cdots \sigma^{d}(v)\sigma^{d+1}(q)$. Then $q_n \mid V$.
Let $(V_s, V_n)$ be the splitting factorization of $V$. Then $V_n =  v_n\sigma(v_n)\cdots \sigma^{d}(v_n) \sigma^{d+1}(q_n)$ and $q_n \mid V_n$.
Since $\disp(q_n)=d$, we have $\gcd(q_n, \sigma^{d+1}(q_n)) = 1$. Hence we have $q_n \mid v_n\sigma(v_n)\cdots \sigma^{d}(v_n)$.
The proof of the divisibility $q_n\mid \prod_{i=0}^{d}\sigma^{-i-1}(v_n)$ is analogous. So the divisibility~\eqref{EQ:div} holds.
\end{proof}


\begin{example}
Let $F=C(x)(t_0, t_1)$ with a $C$-automorphism defined by $\si(x) = x+1, \si(t_0) = t_1$ and $\si(t_1) = -6t_0 + 5t_1$.
Consider the equation
\[f = \frac{636t_0^3+443t_0^2t_1-1428t_0t_1^2+565t_1^3}{2592(3t_0-2t_1)^2(t_0-t_1)^2(2t_0-t_1)(t_0+t_1)} = \si(y) - y.\]
We now decide whether this equation has a solution in~$F$.
Firstly, we can detect that the irreducible factor $2t_0-t_1$ is special and other irreducible factors are normal.
Then the normal part of the denominator of $f$ is $B := (3t_0-2t_1)^2(t_0-t_1)^2(t_0+t_1)$ with dispersion $d = 2$. By Theorem~\ref{THM:normaldenom}, the normal part of
the denominator of any solution $g$ divides the polynomial $36(t_1+t_0)^3(t_1-t_0)^2$. Then we can make an ansatz for $g$ as
\[g = \frac{U}{36(t_1+t_0)^3(t_1-t_0)^2(2t_0-t_1)},\]
where $U\in C(x)[t_0, t_1]$ satisfying the recurrence equation
\[
(t_1+t_0)^3\si(U)-72(t_1-t_0)(2t_1-3t_0)^2 U = b,
\]
where $b = (t_1+t_0)^26(t_1-t_0)(636t_0^3+443t_0^2t_1-1428t_0t_1^2+565t_1^3)$. We can bound the degree of\, $U$ which is~$3$. Then we get $U=t_0^3+4t_0^2+5t_0t_1^2+2t_1^3$ by solving a linear difference system for rational solutions.
So we have the rational solution
\[
g =\frac{2t_1+t_0}{36(t_1-2t_0)(t_0+t_1)(t_1-t_0)^2}.
\]
\end{example}

We will discuss how to estimate special factors in the denominator of $g$ in next section for the difference
fields generated by P-recursive sequences.

\subsection{Number of irreducible special polynomials}\label{sec:numberirredspecial}
In Section~\ref{SubSECT:disp}, we have already outlined the procedure for computing the normal part of the denominator of $g$ satisfying $\sigma(g)-g=f$. The challenge that remains is to handle the special part. As demonstrated in Example~\ref{Exam:strange}, a peculiar situation arises where the denominator of $g$ contains a special polynomial that does not already appear in the denominator of~$f$. Hence, to determine the denominator of~$g$, it becomes necessary to identify all irreducible special polynomials. However, the computation of all special polynomials remains an unresolved issue at present. In this subsection, we aim to establish that the number of irreducible special polynomials in $R$ that do not pairwise differ by elements of $K^*$ is bounded by the number of generators of $R$ over $C(x)$. This provides a crucial insight into the limited diversity of irreducible special polynomials. In Section~\ref{SUBSECT:structureofspecialpolynomials}, under certain assumptions, we will unveil the structure of irreducible special polynomials.
We begin with the following lemma that is a direct consequence of Theorem 2.1.12 on page 114 of \cite{Levin2008}.
\begin{lemma}
	\label{lm:constants}
	Suppose that $\cF$ is a $\sigma$-field of characteristic zero with algebraically closed field $C$ of constants, and $f\in \cF$ satisfying that $\sigma^\ell(f)=f$ for some $\ell>0$. Then $f\in C$.
\end{lemma}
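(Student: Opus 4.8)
The plan is to prove that $f$ is algebraic over $C$ and then exploit the fact that $C$ is algebraically closed. The guiding observation is that the hypothesis $\sigma^\ell(f)=f$ forces the $\sigma$-orbit of $f$ to be finite: the finite sequence $f,\sigma(f),\dots,\sigma^{\ell-1}(f)$ is cyclically shifted by $\sigma$, since $\sigma\bigl(\sigma^{\ell-1}(f)\bigr)=\sigma^\ell(f)=f$. This invariance of the orbit is what will produce $\sigma$-constant coefficients.

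Concretely, I would first form the monic polynomial
\[ P(X)=\prod_{i=0}^{\ell-1}\bigl(X-\sigma^i(f)\bigr)\in\cF[X], \]
extending $\sigma$ to $\cF[X]$ so that it acts on the coefficients and fixes the indeterminate $X$. Since $\sigma$ is a ring homomorphism, applying it to $P$ turns the product into $\prod_{i=1}^{\ell}\bigl(X-\sigma^i(f)\bigr)$, and this coincides with $P(X)$ because $\sigma^\ell(f)=f=\sigma^0(f)$ merely reindexes the factors. Hence every coefficient of $P$ is fixed by $\sigma$, that is, lies in the constant field $C$, so $P\in C[X]$. Taking the factor with $i=0$ gives $P(f)=0$, which shows that $f$ is algebraic over $C$.

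To finish, I would note that $C\subseteq\cF$ and $f\in\cF$ is algebraic over the algebraically closed field $C$, so its minimal polynomial over $C$ is linear; therefore $f\in C$, i.e. $\sigma(f)=f$. This is precisely the specialization of Theorem 2.1.12 of \cite{Levin2008} to our situation, so the lemma follows either directly from that citation or from the self-contained argument just sketched.

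There is no serious obstacle here; the argument is short and essentially formal. The only two points that merit care are (i) confirming that $\sigma$ genuinely permutes the orbit $\{\sigma^i(f)\mid 0\le i\le\ell-1\}$, so that the elementary symmetric functions of $f,\sigma(f),\dots,\sigma^{\ell-1}(f)$ are $\sigma$-invariant and thus land in $C$, and (ii) invoking that an element algebraic over an algebraically closed subfield necessarily lies in that subfield. The characteristic-zero assumption is not essential for this particular deduction and is simply inherited from the ambient setting.
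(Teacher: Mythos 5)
Your proof is correct. The orbit-polynomial argument is sound: since $\sigma^\ell(f)=f$, the automorphism $\sigma$ cyclically permutes the factors of $P(X)=\prod_{i=0}^{\ell-1}\bigl(X-\sigma^i(f)\bigr)$, so $P$ has $\sigma$-invariant coefficients, i.e.\ $P\in C[X]$; then $P(f)=0$ makes $f$ algebraic over $C$, and an algebraically closed field admits no proper algebraic extension inside $\cF$, so $f\in C$. Note, however, that your route differs from the paper's in an essential way: the paper offers no argument at all for this lemma, stating only that it is a direct consequence of Theorem 2.1.12 of \cite{Levin2008}, so its ``proof'' is a black-box citation to difference algebra, whereas yours is a short, self-contained deduction in the style of Artin's lemma (elements with finite $\sigma$-orbit are algebraic over the fixed field). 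Your version buys self-containedness and makes visible exactly which hypotheses are used --- in particular your observation that characteristic zero is irrelevant here is correct, since an algebraically closed field has no proper algebraic extensions in any characteristic, so the conclusion $f\in C$ follows without any separability concerns; the paper's version buys brevity and defers the bookkeeping to a standard reference.
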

\begin{corollary}
	\label{cor:sumofspecialpolynomials}
	Suppose that $p_1, p_2,\dots, p_m$ are special polynomials that are linearly independent over $K$, $\alpha_2,\dots,\alpha_m\in K$. Then $p_1+\alpha_2 p_2+\dots+\alpha_m p_m$ is a special polynomial if and only if $\alpha_2=\cdots=\alpha_m=0$.
\end{corollary}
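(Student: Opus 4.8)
The plan is to prove the nontrivial (``only if'') direction, the converse being immediate: if $\alpha_2=\cdots=\alpha_m=0$ the combination collapses to $p_1$, which is special by assumption. The starting point is a normal form for special polynomials: I claim that if $P\in R$ is special then there are $\ell>0$ and $c\in K^*$ with $\sigma^\ell(P)=cP$. Indeed, by Definition~\ref{DEF:sepcial} we have $P\mid\sigma^i(P)$ for some $i\neq 0$. Since $R$ is a UFD whose units are exactly $K^*$ and $\sigma^i$ is a ring automorphism of $R$ (Hypothesis~\ref{hypo}), the map $\sigma^i$ sends irreducibles to irreducibles and preserves the number of irreducible factors counted with multiplicity; hence $P$ and $\sigma^i(P)$ have the same such number, and $P\mid\sigma^i(P)$ forces the quotient to be a unit. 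This gives $\sigma^i(P)=cP$ with $c\in K^*$, and after replacing the relation by its $\sigma^{-i}$-image when $i<0$ we may take $\ell:=|i|>0$. Notably, no control over the degrees of $\sigma(t_j)$ is needed here.

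Next I would pass to a common period. Applying this normal form to each $p_j$ and to $q:=p_1+\alpha_2p_2+\cdots+\alpha_mp_m$ (which is nonzero and special by hypothesis), and setting $\ell$ to be the least common multiple of the resulting periods, I obtain $\sigma^\ell(p_j)=c_jp_j$ for $1\le j\le m$ and $\sigma^\ell(q)=c_0q$, with all $c_j\in K^*$. Expanding $\sigma^\ell(q)=c_0q$ in two ways gives
\[
 c_1p_1+\sum_{j=2}^m\sigma^\ell(\alpha_j)\,c_jp_j=\sigma^\ell(q)=c_0q=c_0p_1+\sum_{j=2}^m c_0\alpha_j\,p_j .
\]
Comparing coefficients, which is legitimate because $p_1,\dots,p_m$ are $K$-linearly independent and all coefficients lie in $K$, yields $c_0=c_1$ together with $\sigma^\ell(\alpha_j)\,c_j=c_0\alpha_j$ for every $j\ge 2$.

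The last step converts these relations into a constancy statement. For $j\ge 2$ put $\beta_j:=\alpha_j\,p_j/p_1\in F$. Using $c_0=c_1$ and the two eigen-relations, I compute
\[
 \sigma^\ell(\beta_j)=\sigma^\ell(\alpha_j)\,\frac{\sigma^\ell(p_j)}{\sigma^\ell(p_1)}=\frac{c_0}{c_j}\alpha_j\cdot\frac{c_jp_j}{c_1p_1}=\alpha_j\,\frac{p_j}{p_1}=\beta_j ,
\]
so $\beta_j$ is fixed by $\sigma^\ell$. Lemma~\ref{lm:constants} then forces $\beta_j$ to be a constant, and $\alpha_jp_j=\beta_jp_1$ becomes a linear relation between $p_1$ and $p_j$ with coefficients in the constant field. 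If $\alpha_j\neq 0$ this contradicts the linear independence of $p_1,\dots,p_m$; hence $\alpha_j=0$ for all $j\ge 2$, as claimed.

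The main obstacle is the application of Lemma~\ref{lm:constants}, which requires an \emph{algebraically closed} constant field, whereas Hypothesis~\ref{hypo} only guarantees that the constants of $(F,\sigma)$ are $C$. I would address this by base-changing to $\bar C$, working in $\bar F:=\bar C(x)(t_0,\dots,t_{n-1})$ with $\sigma$ extended $\bar C$-linearly, and verifying that the constant field of $(\bar F,\sigma)$ is still $\bar C$; this no-new-constants check upon enlarging $C$ to $\bar C$ is the one genuinely delicate point. Granting it, the lemma yields $\beta_j\in\bar C$, and the final contradiction survives because a relation between $p_1$ and $p_j$ over $\bar C$ (equivalently over $\bar K$) descends to one over $K$, linear independence being preserved under field extension.
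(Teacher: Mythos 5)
Your proof is correct and follows essentially the same route as the paper's: eigen-relations $\sigma^\ell(p_i)=\beta_i p_i$ over a common period, coefficient comparison justified by $K$-linear independence, periodicity of $\alpha_i p_i/p_1$ under $\sigma^\ell$, Lemma~\ref{lm:constants}, and the final linear-independence contradiction. The only divergence is that you fill in two details the paper leaves implicit --- the unit-quotient argument showing that special implies $\sigma^\ell(P)=cP$ with $c\in K^*$, and the algebraically-closed-constants hypothesis of Lemma~\ref{lm:constants} --- and for the latter your base change to $\bar C$ (with its delicate no-new-constants check) can be avoided entirely: any $f\in F$ with $\sigma^\ell(f)=f$ is a root of $\prod_{i=0}^{\ell-1}\bigl(X-\sigma^i(f)\bigr)$, whose coefficients are $\sigma$-invariant and hence lie in $C$ by Hypothesis~\ref{hypo}, so $f$ is algebraic over $C$, and $C$ is relatively algebraically closed in the purely transcendental extension $F=C(x,t_0,\dots,t_{n-1})$, forcing $f\in C$ directly.
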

\begin{proof}
	It suffices to show the necessary part. Suppose that $p_1+\alpha_2 p_2+\dots+\alpha_m p_m$ is a special polynomial. Then there is a positive integer $\ell$ and $\gamma,\beta_1,\dots,\beta_m\in K$ such that $\sigma^\ell(p_i)=\beta_i p_i$ and
	$$
	\sigma^\ell(p_1+\alpha_2 p_2+\dots+\alpha_m p_m)=\gamma (p_1+\alpha_2 p_2+\dots+\alpha_m p_m).
	$$
	A straightforward calculation reveals that $\beta_1=\gamma$, and $\sigma^\ell(\alpha_i)\beta_i=\beta_1\alpha_i$ for all $2\leq i\leq m$. This implies that $\sigma^\ell(\alpha_i p_i)=\beta_1 \alpha_i p_i$ and consequently, $\sigma^{\ell}(\alpha_i p_i/p_1)=\alpha_i p_i/p_1$, for all $2\leq i \leq m$. According to Lemma~\ref{lm:constants}, $\alpha_i p_i/p_1\in C$. For each $2\leq i\leq m$, as $p_i$ and $p_1$ are linearly independent over~$K$, it follows that $\alpha_i=0$.
\end{proof}
\begin{proposition}
	\label{prop:algebraicindependence}
	Suppose that $p_1,\dots, p_m$ are irreducible special polynomials that are pairwise not shift equivalent. Denote by $\ell_i$ the smallest positive integer such that $p_i \mid \sigma^{\ell_i}(p_i)$. Then the $\sigma^j(p_i), i=1,\dots,m, j=0,\dots,\ell_i-1$ are algebraically independent over~$K$.
\end{proposition}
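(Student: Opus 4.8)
The plan is to argue by contradiction, extracting from a hypothetical algebraic relation among the $\sigma^j(p_i)$ a $\sigma^{\ell}$-invariant element of $F$ and then contradicting Lemma~\ref{lm:constants}. We may assume $C$ is algebraically closed, since algebraic independence over $\bar{C}(x)$ implies it over $K=C(x)$. First I would record the multiplicative structure. Each $\sigma^{\ell_i}$ is a $C$-automorphism of $R$, hence sends the irreducible $p_i$ to an irreducible polynomial, so $p_i\mid\sigma^{\ell_i}(p_i)$ forces $\sigma^{\ell_i}(p_i)=\beta_i p_i$ for some $\beta_i\in K^{*}$ (the units of $R$ are exactly $K^{*}$). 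Writing $P_{i,j}:=\sigma^{j}(p_i)$, minimality of $\ell_i$ together with the pairwise non-shift-equivalence of the $p_i$ shows that the $P_{i,j}$ with $0\le j<\ell_i$ are pairwise non-associate in $R$. Setting $\ell:=\lcm(\ell_1,\dots,\ell_m)$ and iterating the eigenvalue relation gives $\sigma^{\ell}(P_{i,j})=\gamma_{i,j}P_{i,j}$ with $\gamma_{i,j}\in K^{*}$, so every $P_{i,j}$ is a multiplicative eigenvector of $\tau:=\sigma^{\ell}$ with eigenvalue in $K^{*}$.

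Now suppose the $P_{i,j}$ are algebraically dependent over $K$, and choose a nonzero $\Phi=\sum_{\mathbf e}c_{\mathbf e}X^{\mathbf e}\in K[X_{i,j}]$ vanishing on the $P_{i,j}$ whose support is minimal. For a monomial $X^{\mathbf e}=\prod_{i,j}X_{i,j}^{e_{i,j}}$ put $M_{\mathbf e}:=\prod_{i,j}P_{i,j}^{e_{i,j}}$, so that $\tau(M_{\mathbf e})=\Gamma_{\mathbf e}M_{\mathbf e}$ with $\Gamma_{\mathbf e}:=\prod_{i,j}\gamma_{i,j}^{e_{i,j}}\in K^{*}$. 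Applying $\tau$ to the relation $\sum_{\mathbf e}c_{\mathbf e}M_{\mathbf e}=0$ yields a second relation $\sum_{\mathbf e}\tau(c_{\mathbf e})\Gamma_{\mathbf e}M_{\mathbf e}=0$ on the same monomials. Fixing one $\mathbf e_0$ in the support and taking the $K$-linear combination of the two relations that cancels the $\mathbf e_0$-term produces a relation of strictly smaller support; by minimality it is the zero polynomial, giving $c_{\mathbf e_0}\tau(c_{\mathbf e})\Gamma_{\mathbf e}=\tau(c_{\mathbf e_0})\Gamma_{\mathbf e_0}c_{\mathbf e}$ for every $\mathbf e$ in the support.

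The key step is to read this as an invariance statement. For $\mathbf e$ in the support set $\rho_{\mathbf e}:=(c_{\mathbf e}/c_{\mathbf e_0})(M_{\mathbf e}/M_{\mathbf e_0})\in F^{*}$; the identity just derived, together with $\tau(M_{\mathbf e}/M_{\mathbf e_0})=(\Gamma_{\mathbf e}/\Gamma_{\mathbf e_0})(M_{\mathbf e}/M_{\mathbf e_0})$, gives $\tau(\rho_{\mathbf e})=\rho_{\mathbf e}$. By Lemma~\ref{lm:constants} we obtain $\rho_{\mathbf e}\in C$, whence $M_{\mathbf e}/M_{\mathbf e_0}=(c_{\mathbf e_0}/c_{\mathbf e})\rho_{\mathbf e}\in K$. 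Thus $M_{\mathbf e}$ and $M_{\mathbf e_0}$ differ by a unit of $R$; since both are products of the pairwise non-associate irreducibles $P_{i,j}$, unique factorization forces $\mathbf e=\mathbf e_0$. Therefore the support of $\Phi$ is the single monomial $\mathbf e_0$, and then $\Phi(\dots)=c_{\mathbf e_0}M_{\mathbf e_0}\neq 0$ contradicts $\Phi(\dots)=0$, proving algebraic independence.

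The main obstacle I anticipate is exactly the passage from the cancellation identity to a genuine element of $C$: this is where the eigenvector structure under $\tau=\sigma^{\ell}$ rather than $\sigma$ is essential, and where the hypothesis that $C$ is algebraically closed cannot be dropped, since Lemma~\ref{lm:constants} is precisely the tool that upgrades $\tau$-invariance to membership in $C$. A secondary bookkeeping point to verify is that reducing to algebraically closed $C$ does not disturb the hypotheses, in particular that one may still work with pairwise non-associate irreducible factors and the corresponding eigenvalue relations after the base change.
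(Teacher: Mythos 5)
Your proof is correct, but it takes a genuinely different route from the paper's. Both arguments start the same way: the $\sigma^j(p_i)$ are multiplicative eigenvectors of a common power of $\sigma$ (your $\tau=\sigma^{\ell}$, the paper's $\sigma^N$ with $N=\lcm(\ell_1,\dots,\ell_m)$), and both finish with the same two facts (minimality of $\ell_i$ and pairwise non-shift-equivalence, which you package as ``the $P_{i,j}$ are pairwise non-associate irreducibles''). The difference is the middle: the paper invokes the difference analogue of the Kolchin--Ostrowski theorem as a black box to convert algebraic dependence into a monomial relation $\prod_{i,j}\sigma^j(p_i)^{d_{i,j}}=c\beta$ with integer exponents not all zero and $\beta\in K^*$, and then derives the contradiction by splitting the exponents into positive and negative parts and comparing irreducible factors. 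Your minimal-support descent --- apply $\tau$ to a shortest vanishing relation, cancel one term, and conclude by minimality that every coefficient identity $c_{\mathbf e_0}\tau(c_{\mathbf e})\Gamma_{\mathbf e}=\tau(c_{\mathbf e_0})\Gamma_{\mathbf e_0}c_{\mathbf e}$ holds --- replaces that citation by an elementary, self-contained argument; in effect you re-prove exactly the special case of Kolchin--Ostrowski that is needed. What the paper's route buys is brevity via an external theorem; what yours buys is self-containedness, and in addition you make explicit a point the paper glosses over: Lemma~\ref{lm:constants} requires an algebraically closed constant field, while Hypothesis~\ref{hypo} does not assume this of $C$. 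Your flagged bookkeeping for the base change does go through: since $F$ is purely transcendental over $C$, the field $F\otimes_C C''$ is a domain (hence a field) for every finite extension $C''/C$, so the constants of $\bar C(x)(t_0,\dots,t_{n-1})$ are exactly $\bar C$; and because the ratio $M_{\mathbf e}/M_{\mathbf e_0}$ lies in $F\cap\bar C(x)=K$, the final unique-factorization step can be pulled back to $R$ and run with the original irreducibles, so possible factorization of the $p_i$ over $\bar C$ causes no harm.
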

\begin{proof}
	Set $N =\lcm(\ell_1,\dots,\ell_m)$. Then $\sigma^N(\sigma^j(p_i))=\alpha_{i,j}\sigma^j(p_i)$ for some $\alpha_{i,j}\in K$.
	Suppose on the contrary that the $\sigma^j(p_i), i=1,\dots,m, j=0,\dots,\ell_i-1$ are algebraically dependent over $K$.  Due to the difference analogue of Kolchin-Ostrowski theorem (see \cite{Hardouin2008-hypertranscendance,Ogawara2017}), there are integers $d_{i,j}, i=1,\dots,m,j=0,\dots,\ell_i-1$, not all zero, and $\beta\in K^*$ such that
	$$
	\prod_{i=1}^m \prod_{j=0}^{\ell_i-1} \alpha_{i,j}^{d_{i,j}}=\frac{\sigma^N(\beta)}{\beta}.
	$$
	Since $\sigma^N( \prod_{i,j} \sigma^j(p_i)^{d_{i,j}})=  \prod_{i,j} \alpha_{i,j}^{d_{i,j}} \prod_{i,j} \sigma^j(p_i)^{d_{i,j}}$, we have that
	$$
	\sigma^{N}\left(\frac{ \prod_{i,j} \sigma^j(p_i)^{d_{i,j}}}{\beta}\right)= \frac{\prod_{i,j} \sigma^j(p_i)^{d_{i,j}}}{\beta}.
	$$
	Due to Lemma~\ref{lm:constants}, $ \prod_{i,j} \sigma^j(p_i)^{d_{i,j}}=c\beta$ for some $c\in C$.
	Denote $S_1=\{(i,j) \mid d_{i,j}>0\}$ and $S_2=\{(i,j) \mid d_{i,j}<0\}$. Since the $d_{i,j}$ are not all zero and $t_0,\dots,t_{n-1}$ are algebraically independent over~$K$, neither $S_1$ nor $S_2$ is empty. This leads to
	$$
	\prod_{(i,j)\in S_1}\sigma^j(p_i)^{d_{i,j}}=c\beta\prod_{(i,j)\in S_2}\sigma^j(p_i)^{-d_{i,j}}.
	$$
	Choose $(i_1,j_1)\in S_1$.  Then $\sigma^{j_1}(p_{i_1})$ divides $\prod_{(i,j)\in S_2}\sigma^j(p_i)^{-d_{i,j}}$ and thus there exists $(i_2,j_2)\in S_2$ such that $\sigma^{j_1}(p_{i_1})$ divides $\sigma^{j_2}(p_{i_2})$.  As both  $\sigma^{j_1}(p_{i_1})$ and $\sigma^{j_2}(p_{i_2})$ are irreducible, $\sigma^{j_1}(p_{i_1})=\gamma\sigma^{j_2}(p_{i_2})$ for some $\gamma\in K$. If $i_1=i_2$ then $0\leq j_1\neq j_2\leq \ell_{i_1}-1$. Without loss of generality, assume $j_2>j_1$. Then $\sigma^{j_2-j_1}(p_{i_1})=p_{i_1}/\sigma^{-j_1}(\gamma)$, which contradicts the minimality of $\ell_{i_1}$. If $i_1\neq i_2$ then $p_{i_1}$ and $p_{i_2}$ are shift equivalent, contradicting the initial assumption.
\end{proof}

Since $F$ is a field generated over $K$ by $n$ indeterminates, the transcendence degree of $F$ over $K$ is equal to $n$. So we have the following corollary.
\begin{corollary}
	\label{cor:numbersofspecialpolynomials}
	Suppose that $p_1,\dots, p_m$ are irreducible special polynomials that are pairwise not shift equivalent. Then $\sum_{i=1}^m \ell_i \leq n$, where $\ell_i$ is the smallest positive integer such that $p_i \mid \sigma^{\ell_i}(p_i)$.
\end{corollary}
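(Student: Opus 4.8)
The plan is to deduce the bound directly from Proposition~\ref{prop:algebraicindependence} together with the elementary fact that any subset of $F$ which is algebraically independent over $K$ has cardinality at most $\trdeg(F/K)$. Since $F = K(t_0,\dots,t_{n-1})$ is the rational function field in $n$ variables over $K$, we have $\trdeg(F/K) = n$, and this is precisely the number against which the count of independent elements must be measured. So the whole corollary reduces to a counting argument.

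Concretely, I would first invoke Proposition~\ref{prop:algebraicindependence} to obtain that the family
\[
\{\sigma^j(p_i) \mid 1\le i \le m,\ 0\le j \le \ell_i - 1\}
\]
is algebraically independent over $K$. Algebraic independence forces these elements to be pairwise distinct, since any coincidence $\sigma^{j_1}(p_{i_1}) = \sigma^{j_2}(p_{i_2})$ would itself be a nontrivial algebraic relation; hence the family has exactly $\sum_{i=1}^m \ell_i$ members. As all of them lie in $F$ and $\trdeg(F/K) = n$, the size of any algebraically independent set cannot exceed $n$, which yields $\sum_{i=1}^m \ell_i \le n$ as claimed.

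There is essentially no obstacle remaining at this stage: all the substantive difficulty has already been absorbed into Proposition~\ref{prop:algebraicindependence}, whose proof rests on the difference analogue of the Kolchin--Ostrowski theorem and on Lemma~\ref{lm:constants}. The corollary is then a pure transcendence-degree count. The only point requiring a line of care is the distinctness of the listed elements, which I would justify by observing that equal elements would violate algebraic independence; once distinctness is in place, the count $\sum_{i=1}^m \ell_i$ and its comparison with $n$ are immediate.
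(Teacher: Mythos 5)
Your proof is correct and takes essentially the same route as the paper: the paper also treats the corollary as an immediate consequence of Proposition~\ref{prop:algebraicindependence} combined with the observation that $\trdeg(F/K)=n$, so that an algebraically independent family in $F$ over $K$ can have at most $n$ members. Your additional remark that algebraic independence forces the elements $\sigma^j(p_i)$ to be pairwise distinct (so the family really has $\sum_{i=1}^m \ell_i$ members) is a valid point of care that the paper leaves implicit.
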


\section{The P-recursive case}\label{sec:p-finite}

P-recursive sequences, introduced by Stanley~\cite{stanley1980}, satisfy linear recurrence equations with polynomial coefficients.
The generating function of a P-recursive sequence is a D-finite function, which satisfies a linear differential equations with polynomial coefficients.
This class of sequences has been extensively studied in combinatorics~\cite{zeilberger90, Stanley1999} and symbolic computation~\cite{kauers10j, Kauers2023} together with its generating functions.
In this section, we will focus on parallel summation in difference fields generated by P-recursive sequences.

Let $F$ be the field $C(x)(t_0, \ldots, t_{n-1})$ with a $C$-automorphism $\si$ satisfying that
$\si(x) = x+1$, $\si(t_0) = t_1, \ldots, \si(t_{n-2}) = t_{n-1}$, and
\[\si(t_{n-1}) = a_0 t_0 + \cdots + a_{n-1}t_{n-1},\]
where $a_0, \ldots, a_{n-1} \in C(x)$ and $a_0\neq 0$.  So $\si$ is a $C$-automorphism of the ring $R= C(x)[t_0, \ldots, t_{n-1}]$.
We still assume in this section that the constant field of $(F, \si)$ is the field~$C$, even though this condiction is not easy to check for a given field.
In order to study the indefinite summation problem in~$F$, we first address two basic questions
on special and normal polynomials. In Section~\ref{SUBSECT:structureofspecialpolynomials}, we prove some structural properties on special polynomials under certain assumptions. In Section~\ref{SUBSECT:equivalence}, we
will answer the question of deciding whether two irreducible polynomials in $R$ are $\si$-equivalent or not.

\subsection{Degrees of irreducible special polynomials}\label{SUBSECT:structureofspecialpolynomials}
In the P-recursive case, by Lemma~\ref{lm:homogeneous} below, computing all special polynomials of degree $m$ is equivalent to computing all hypergeometric solutions of the $m$th symmetric power of the system
\begin{equation}
	\label{eqn:differencesystem}
	\sigma(Y)=AY
\end{equation}
or $\sigma^s(Y)=A_{(s)}Y$ for some $s>1$, where
\[
  A=\begin{pmatrix} 0 & 1 &  & &  \\  & 0 & 1 &  & \\ && \ddots & \ddots &  \\ &  & & 0&1 \\ a_0 & a_1 & a_2 & \dots & a_{n-1} \end{pmatrix}
  \]
 and $A_{(s)}=\sigma^{s-1}(A)\dots \sigma(A)A$.
	Algorithms for computing all hypergeometric solutions of a given linear difference equation are known, for example, refer to~\cite{Petkovsek1998}. Corollary~\ref{cor:numbersofspecialpolynomials} establishes the existence of a degree bound for all irreducible special polynomials.  However, by the absence of a known degree bound, the computation of all irreducible special polynomials remains an unresolved challenge. In this subsection, we will prove that when $\sum_{i=1}^m \ell_i=n$ with $\ell_i$ as defined in Corollary~\ref{cor:numbersofspecialpolynomials}, all irreducible special polynomials are linear in $t_0,t_1,\dots,t_{n-1}$. Consequently, in this specific case, the degree of all irreducible special polynomials is exactly equal to 1 and thus we can compute all irreducible special polynomials.
\begin{lemma}
\label{lm:homogeneous}
	All special polynomials are homogeneous.
\end{lemma}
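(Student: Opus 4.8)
The plan is to exploit the fact that $\sigma$ respects the grading of $R=C(x)[t_0,\dots,t_{n-1}]$ by total degree in $t_0,\dots,t_{n-1}$. Each generator is sent to a form homogeneous of degree one: $\sigma(t_j)=t_{j+1}$ for $j<n-1$ and $\sigma(t_{n-1})=a_0t_0+\dots+a_{n-1}t_{n-1}$ with $a_i\in C(x)$. The same holds for $\sigma^{-1}$---here the hypothesis $a_0\neq 0$ lets one solve for $\sigma^{-1}(t_0)$ as a degree-one form---so $\sigma$ maps the degree-one component $R_1=\bigoplus_j C(x)t_j$ bijectively onto itself (note that $\sigma$ moves $x$ but maps $C(x)=R_0$ to itself). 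Since $\sigma$ is multiplicative, each $\sigma^i$ maps the homogeneous component $R_d$ onto itself; in particular $\sigma^i$ preserves total degree in the $t_j$ for every $i\in\bZ$.

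First I would translate specialness into an eigen-equation. Let $P\in R$ be special, so $P\mid\sigma^i(P)$ for some $i\neq 0$; write $\sigma^i(P)=H\cdot P$ with $H\in R$. Comparing total degrees in $t_0,\dots,t_{n-1}$ and using that $\sigma^i$ preserves this degree gives $\deg H=0$, i.e. $H=c$ for some $c\in C(x)^*$. Thus $\sigma^i(P)=cP$. Decomposing $P=\sum_d P_d$ into its homogeneous components and matching components of equal degree (each $\sigma^i(P_d)$ again lies in $R_d$, and $cP_d\in R_d$) yields $\sigma^i(P_d)=cP_d$ for every $d$.

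The conclusion then reduces to a constants argument. Suppose, for contradiction, that $P$ is not homogeneous, so there are indices $a\neq b$ with $P_a\neq 0\neq P_b$, and set $h=P_a/P_b\in F$. From $\sigma^i(P_a)=cP_a$ and $\sigma^i(P_b)=cP_b$ we get $\sigma^i(h)=h$, so the $\sigma$-orbit of $h$ is finite; the elementary symmetric functions of this finite set are $\sigma$-invariant, hence lie in $C$, so $h$ is algebraic over $C$. Since $F=C(x,t_0,\dots,t_{n-1})$ is a purely transcendental extension of $C$, the field $C$ is relatively algebraically closed in $F$, and therefore $h\in C$ (this is exactly the content of Lemma~\ref{lm:constants} when $C$ is algebraically closed, and the relative-closedness remark removes that hypothesis). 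But $h=P_a/P_b\in C$ would force $P_a=\gamma P_b$ with $\gamma\in C$, impossible for $a\neq b$ since $R_a\cap R_b=0$. This contradiction shows $P=P_a$ for a single $a$, i.e. $P$ is homogeneous.

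The main obstacle I anticipate is the final step, passing from $\sigma^i$-invariance of $h$ to $h\in C$: Lemma~\ref{lm:constants} is stated for an algebraically closed constant field, whereas here $C$ is only assumed of characteristic zero. The fix is the observation that $C$ is relatively algebraically closed in the rational function field $F$, so that \emph{algebraic over $C$} already forces membership in $C$; alternatively one extends scalars to $\overline{C}$ and checks that no new $\sigma$-constants appear. Everything else---the grading of $\sigma$, the degree comparison, and the homogeneous splitting---is routine once the grading observation is in place.
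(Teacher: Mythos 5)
Your proof is correct and follows essentially the same route as the paper's: extract the eigen-equation $\sigma^{\ell}(P)=cP$ with $c\in C(x)^*$, match homogeneous components to get $\sigma^{\ell}(P_d)=cP_d$, and apply Lemma~\ref{lm:constants} to the ratio of two nonzero components of different degrees to reach a contradiction. Your two refinements---deriving $c\in C(x)^*$ from the divisibility definition via degree preservation, and repairing the appeal to Lemma~\ref{lm:constants} (which assumes $C$ algebraically closed) by observing that $C$ is relatively algebraically closed in the purely transcendental extension $F$---are both correct and in fact fill small gaps that the paper's own proof glosses over.
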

\begin{proof}
	Suppose that $p$ is a special polynomial and is not homogeneous. Write $p=\sum_{i=0}^m p_i$ where $p_i$ is the $i$th homogeneous part of $p$ and $p_m\neq 0$. Assume that $\sigma^\ell(p)=\alpha p$ for some nonzero $\alpha\in C(x)$. Then
	$\sigma^\ell(p)=\sum_{i=0}^m \sigma^\ell(p_i)=\alpha p=\sum_{i=0}^m \alpha p_i$. Note that $\sigma^\ell(p_i)$ is also homogeneous of degree~$i$. We have that $\sigma^\ell(p_i)=\alpha p_i$ for all $0\leq i \leq m$. Since $p$ is not homogeneous, there is an $i_0$ such that $p_{i_0}\neq 0$. Hence $\sigma^\ell(p_{i_0}/p_m)=p_{i_0}/p_m$. According to Lemma~\ref{lm:constants}, $p_{i_0}/p_m\in C$, which contradicts the fact that the numerator and denominator of $p_{i_0}/p_m$ have different degrees.
\end{proof}

We start with the $C$-finite case. In this case, we will demonstrate that the degree of all irreducible special polynomials is always equal to~$1$, without requiring the assumption that $\sum_{i=1}^m \ell_i = n$.
\begin{proposition}
	\label{prop:cfinitecase} Suppose that $A\in \GL_n(C)$. Then all irreducible special polynomials are linear in $t_0,t_1,\dots,t_{n-1}$.
\end{proposition}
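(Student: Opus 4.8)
The plan is to pass to coordinates in which $\sigma$ acts diagonally on the variables, and then to show that an irreducible special polynomial is supported on a single monomial. First I would use Lemma~\ref{lm:homogeneous} to record that a special $p$ is homogeneous, say of degree $m$, and that irreducibility gives $\sigma^{\ell}(p)=\alpha p$ for some $\ell>0$ and $\alpha\in C(x)^{*}$ (an irreducible $p$ dividing $\sigma^{i}(p)$ is an associate of it, and the unit lies in $C(x)^{*}$). Working over an algebraically closed $C$ (the setting in which Lemma~\ref{lm:constants} is available), I would diagonalize $A\in\GL_n(C)$. The key preliminary point is that the no-new-constants hypothesis forces $A$ to be semisimple: if $A$ had a Jordan block of size $\ge 2$, there would be $C$-independent linear forms $y_1,y_2$ with $\sigma(y_1)=\lambda y_1$ and $\sigma(y_2)=\lambda y_2+y_1$ for some $\lambda\in C^{*}$, whence $\sigma(y_2/y_1)=y_2/y_1+\lambda^{-1}$ and $y_2/y_1-\lambda^{-1}x$ would be a constant of $(F,\sigma)$ outside $C$, a contradiction. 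Hence $A$ is diagonalizable, and there is a basis of linear forms $y_1,\dots,y_n\in\operatorname{span}_C\{t_0,\dots,t_{n-1}\}$ with $\sigma(y_i)=\lambda_i y_i$, $\lambda_i\in C^{*}$; these are algebraically independent over $C(x)$, $R=C(x)[y_1,\dots,y_n]$, and $\sigma(y^{e})=\lambda^{e}y^{e}$ where $\lambda^{e}:=\prod_i\lambda_i^{e_i}$.

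Next I would expand $p=\sum_{e\in S}c_e y^{e}$ with $c_e\in C(x)^{*}$ and $S$ its support (all $|e|=m$). Comparing coefficients of $y^{e}$ in $\sigma^{\ell}(p)=\alpha p$ gives $\sigma^{\ell}(c_e)\lambda^{\ell e}=\alpha c_e$ for every $e\in S$, so that each single term $z_e:=c_e y^{e}$ satisfies $\sigma^{\ell}(z_e)=\alpha z_e$. Fixing $e_0\in S$, the ratio $z_e/z_{e_0}=(c_e/c_{e_0})\,y^{e-e_0}$ is then $\sigma^{\ell}$-invariant, hence lies in $C$ by Lemma~\ref{lm:constants}; since $c_e/c_{e_0}\in C(x)$ and the $y_i$ are algebraically independent over $C(x)$, the Laurent monomial $y^{e-e_0}$ can be constant only when $e=e_0$. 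Therefore $S=\{e_0\}$ and $p$ is a unit times the single monomial $y^{e_0}$. An irreducible monomial must be a single variable, so $|e_0|=1$ and $p$ is linear in $t_0,\dots,t_{n-1}$, as claimed.

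The step I expect to be the crux is the reduction to diagonal coordinates: everything downstream is a short computation, but it relies on $A$ being semisimple, which is not automatic for a companion matrix and must be extracted from the standing hypothesis that $(F,\sigma)$ has no new constants, via the Jordan-chain argument above. A second, more technical caveat is the appeal to algebraic closure of $C$, which is needed both to split $A$ and to invoke Lemma~\ref{lm:constants}. Without it the statement can genuinely fail at the level of a fixed field, for instance a real quadratic form arising as the product $y\bar y$ of conjugate eigen-forms can be irreducible and special; such forms are only excluded by a finer analysis of when the constant field remains equal to $C$, and passing to $\bar C$ is delicate because the no-new-constants hypothesis need not survive the extension. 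I would therefore carry out the proof under the algebraically closed assumption that already underlies Lemma~\ref{lm:constants}.
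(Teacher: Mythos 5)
Your proof is correct and follows essentially the same route as the paper: the same Jordan-block/new-constant argument forcing $A\in\GL_n(C)$ to be diagonalizable, followed by the observation that in eigen-coordinates a special polynomial must be supported on a single monomial --- the paper obtains this last step by citing Corollary~\ref{cor:sumofspecialpolynomials}, whose proof via Lemma~\ref{lm:constants} you have simply inlined as a direct coefficient comparison. Your caveat about the algebraic closure of $C$ is well taken (the paper's Jordan reduction and its appeals to Lemma~\ref{lm:constants} tacitly assume it as well), but it does not change the substance of the argument.
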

\begin{proof}
	Let $B\in \GL_n(C)$ such that $BAB^{-1}=\diag(J_1,J_2,\dots,J_\ell)$, where $J_i$ is a Jordan block of order $n_i$. We claim that $n_i=1$ for all $1\leq i \leq \ell$. Without loss of generality, assume that $n_1>1$ and $\alpha_1$ is the eigenvalue of $J_1$.
	Set $\bar{T}=(\bar{t}_0,\dots,\bar{t}_{n-1})^t=B(t_0,\dots,t_{n-1})^t$. Then $\sigma(\bar{T})=BAB^{-1}\bar{T}$. Therefore $\sigma(\bar{t}_{n_1-1})=\alpha_1 \bar{t}_{n_1-1}+\bar{t}_{n_1}$ and $\sigma(\bar{t}_{n_1})=\alpha_1\bar{t}_{n_1}$. From these, it follows that $\sigma(\frac{\bar{t}_{n_1-1}}{\bar{t}_{n_1}})=\frac{\bar{t}_{n_1-1}}{\bar{t}_{n_1}}+\frac{1}{\alpha_1}$. Consequently,
	$$
	\sigma\left(\frac{\bar{t}_{n_1-1}}{\bar{t}_{n_1}}-\frac{x}{\alpha_1}\right)=\frac{\bar{t}_{n_1-1}}{\bar{t}_{n_1}}-\frac{x}{\alpha_1}.
	$$
	In other words, $\bar{t}_{n_1-1}/\bar{t}_{n_1}-x/\alpha_1\in C$, which is a contradiction with Hypothesis~\ref{hypo}. This proves our claim. Therefore $BAB^{-1}=\diag(\alpha_1,\dots,\alpha_n)$, where $\alpha_i\in C$. Finally, suppose that $p$ is an irreducible special polynomial. Note that $p$ can be expressed as a polynomial in $\bar{t}_0,\bar{t}_1,\dots,\bar{t}_{n-1}$. Corollary~\ref{cor:sumofspecialpolynomials} implies that $p$ is a monomial in $\bar{t}_0,\bar{t}_1,\dots,\bar{t}_{n-1}$. Hence $p=\beta \bar{t}_i$ for some $0\leq i \leq n-1$ and $\beta\in K$, and so it is linear in $t_0,t_1,\dots,t_{n-1}$.
\end{proof}
\begin{remark}
	\label{rem:cfinite}
	In the proof of Proposition~\ref{prop:cfinitecase}, the special polynomials $\bar{t}_0,\dots,\bar{t}_{n-1}$ are pairwise not shift equivalent and then the condition $\sum_{i=1}^m \ell_i=n$ is automatically satisfied. In fact, suppose $\sigma(\bar{t}_{i_1})=\beta\bar{t}_{i_2}$ for some $0\leq i_1\neq i_2\leq n-1$ and $\beta\in K$. Since $\sigma(\bar{t}_{i_1})=\alpha\bar{t}_{i_1}$ for some $\alpha\in K$, it follows that $\bar{t}_1,\bar{t}_2$ are linearly dependent over~$K$, which contradicts the fact that $\bar{t}_0,\dots,\bar{t}_{n-1}$ are algebraically independent over~$K$.
\end{remark}
Before proceeding to the general case, let's recall some fundamental results from difference Galois theory. For detailed information, readers can refer to Chapter~1 of \cite{vanderPutSinger1997-difference}. Let $\cR$ be the Picard-Vessiot ring for $\sigma(Y)=AY$ over~$K$, where $A$ is given as in \eqref{eqn:differencesystem}. In $\cR$, there exist idempotents $e_0,e_1,\dots,e_{s-1}$ such that
$$
\cR=\cR_0\oplus \cR_1 \oplus \dots \oplus \cR_{s-1},
$$
where $\cR_i=e_i\cR$ and $\cR_i$ is a domain. Moreover, $\cR_i$ serves as the Picard-Vessiot ring for $\sigma^s(Y)=A_{(s)}Y$ over $K$ with
$$
A_{(s)}=\sigma^{s-1}(A)\dots\sigma(A)A.
$$
Let $G$ be the Galois group of $\sigma(Y)=AY$ over $K$ and $H$ be the Galois group of $\sigma^s(Y)=A_{(s)}Y$ over~$K$. By Corollary~1.17 \cite[p.13]{vanderPutSinger1997-difference} we have $[G:H]=s$ and consequently, $H$~contains $G^{\circ}$, the identity component of~$G$. On the other hand, due to Proposition~1.20, $\cR_i$ is a trivial $H$-torsor which implies that $H$ is connected since $\cR_i$ is a domain. Hence $H=G^\circ$. In the following, it will be shown that the hypothesis $\sum_{i=1}^m \ell_i=n$ implies that the group $G^\circ$ is a torus. Consequently, the system $\sigma^s(Y)=A_{(s)}Y$ is equivalent to a diagonal system, indicating that appropriate linear combinations of $t_0,t_1,\dots,t_{n-1}$ are special polynomials. Indeed, they are all irreducible special polynomials.\begin{lemma}
	\label{lm:goodfundamentalmatrix}
	Suppose that $p_1,\dots, p_m$ are special polynomials. Then there exists a fundamental matrix $\cZ\in \GL_n(\cR)$ of $\sigma(Y)=AY$ such that $p_i(\cZ_j)$ is invertible in $\cR$ for all $1\leq i \leq m, 1\leq j \leq n$, where $\cZ_j$ denotes the $j$th column of $\cZ$.
\end{lemma}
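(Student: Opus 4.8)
The plan is to reduce the statement to two facts: that evaluating a special polynomial at a single solution vector produces a \emph{semi-invariant} of $\cR$, which is necessarily zero or a unit; and that the freedom to replace the fundamental matrix by another one within its $\GL_n(C)$-torsor lets us avoid the vanishing locus. I would first record the key structural observation: for any fundamental matrix $\cZ$ and any column $\cZ_j$, the assignment $t_k\mapsto (\cZ_j)_k$ defines a $K$-algebra homomorphism $\phi_j\colon R\to\cR$ that \emph{commutes with $\si$}, because $\si((\cZ_j)_k)=(A\cZ_j)_k$ reproduces exactly the defining relations $\si(t_k)=t_{k+1}$ and $\si(t_{n-1})=\sum_k a_k t_k$ when $A$ is the companion matrix. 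By Lemma~\ref{lm:homogeneous} each special $p_i$ is homogeneous, and since $\si$ preserves degrees, $p_i\mid\si^{\ell_i}(p_i)$ forces $\si^{\ell_i}(p_i)=\alpha_i p_i$ for some $\alpha_i\in K^*\subseteq\cR^\times$. Applying $\phi_j$ gives $\si^{\ell_i}(p_i(\cZ_j))=\alpha_i\,p_i(\cZ_j)$, so $w:=p_i(\cZ_j)$ is a $\si^{\ell_i}$-semi-invariant.

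The main obstacle is to show that such a $w$ is zero or a unit; the difficulty is that a special polynomial only yields $\si^{\ell}$-semi-invariance rather than single-step semi-invariance, and this interacts with the cyclic decomposition $\cR=\cR_0\oplus\cdots\oplus\cR_{s-1}$. Fixing $i$ and writing $\ell=\ell_i,\alpha=\alpha_i$, and $w=\sum_i w_i$ with $w_i=e_i w$, I would use that $\si$ permutes the $e_i$ cyclically to derive $\si^{\ell}(w_i)=\alpha\,w_{i+\ell}$ (indices mod $s$) and, after $s/\gcd(\ell,s)$ iterations, $(\si^s)^{e}(w_i)=\beta_i w_i$ for a unit $\beta_i$ and $e=\ell/\gcd(\ell,s)\geq 1$. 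Each $\cR_i$ is a $\si^s$-simple Noetherian domain, being the Picard--Vessiot ring of $\si^s(Y)=A_{(s)}Y$, and here I would prove the crucial claim that a nonzero $\si^s$-semi-invariant in such a ring is a unit. If it were not, the principal ideal $(w_i)$ would have a height-one minimal prime $\mathfrak p$; since $(w_i)$ is $(\si^s)^e$-stable, $\si^s$ permutes the finitely many minimal primes of $(w_i)$, so the $\si^s$-orbit of $\mathfrak p$ is finite and its intersection is a nonzero proper $\si^s$-stable ideal, contradicting $\si^s$-simplicity. Hence each $w_i$ is zero or a unit of $\cR_i$, and therefore $w$ is zero or a unit of $\cR$.

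Finally I would exploit that every fundamental matrix has the form $\cZ_0 B$ with $B\in\GL_n(C)$, and that $\cZ_0\in\GL_n(\cR)$ induces an invertible linear change of the variables $t_0,\dots,t_{n-1}$ over $\operatorname{Frac}(\cR_{i'})$. For fixed $i,i',j$, the element $e_{i'}\,p_i((\cZ_0 B)_j)$ is a polynomial in the $j$th column of $B$ with coefficients in $\cR_{i'}$; were it identically zero, invertibility of the substitution would force $p_i=0$, a contradiction. Thus each of these finitely many conditions defines a proper Zariski-closed subset of the space of matrices, and since $C$ is infinite one can choose $B\in\GL_n(C)$ avoiding all of them together with $\det\neq 0$. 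For this $B$, every $e_{i'}\,p_i((\cZ_0 B)_j)$ is a nonzero semi-invariant in the domain $\cR_{i'}$, hence a unit; consequently $p_i((\cZ_0 B)_j)$ is a unit in $\cR$ for all $i$ and $j$, so $\cZ:=\cZ_0 B$ is the desired fundamental matrix. I expect the per-component zero-or-unit dichotomy to be the technically delicate step, while the evaluation homomorphism and the genericity argument are routine.
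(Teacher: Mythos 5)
Your strategy is sound and genuinely different from the paper's, but one inference in your central step is wrong as written. You claim that since $(w_i)$ is $(\si^s)^e$-stable, ``$\si^s$ permutes the finitely many minimal primes of $(w_i)$''. It does not: $\si^s$ carries minimal primes of $(w_i)$ to minimal primes of the \emph{different} ideal $(\si^s(w_i))$; only the power $(\si^s)^e$ permutes the minimal primes of $(w_i)$. The finiteness of the $\si^s$-orbit of $\mathfrak{p}$ that you want is still true, since that orbit is the union of the $(\si^s)^e$-orbits of $\si^{sr}(\mathfrak{p})$ for $r=0,\dots,e-1$, and each of these is finite because $\si^{sr}(\mathfrak{p})$ is minimal over the $(\si^s)^e$-stable ideal $(\si^{sr}(w_i))$; but the justification has to be repaired. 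A cleaner repair avoids primes and Noetherianity altogether: in a domain, a finite intersection of nonzero ideals is nonzero (it contains a product of one nonzero element from each), so for $\tau=\si^s$ and any nonzero $\tau^e$-stable ideal $I$ of $\cR_i$, the ideal $I\cap\tau(I)\cap\dots\cap\tau^{e-1}(I)$ is nonzero and $\tau$-stable, hence equals $\cR_i$ by $\tau$-simplicity, hence $I=\cR_i$. Thus a $\si^s$-simple domain is automatically $(\si^s)^e$-simple, and your nonzero semi-invariant $w_i$ generates a $(\si^s)^e$-stable ideal, so it is a unit. A second, smaller slip: from ``each $w_i$ is zero or a unit of $\cR_i$'' it does \emph{not} follow that ``$w$ is zero or a unit of $\cR$'' (a mixture of zero and unit components is a nonzero zero divisor). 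This does not damage the proof, because your genericity step forces \emph{every} component $e_{i'}p_i((\cZ_0B)_j)$ to be nonzero, after which every component is a unit and hence so is $p_i((\cZ_0B)_j)$; but the intermediate global dichotomy for $w$ should be deleted.

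With these repairs your proof is correct, and it is instructive to compare it with the paper's, which sidesteps the entire difficulty caused by $\ell_i>1$ (i.e., by having only $\si^{\ell_i}$-semi-invariance) with one trick: it replaces the $p_i$ by the orbit product $q=(\prod_i p_i)\,\si(\prod_i p_i)\cdots\si^{N-1}(\prod_i p_i)$, where $N$ is chosen so that $p_i\mid\si^N(p_i)$ for all $i$. This $q$ satisfies $\si(q)=\alpha q$ with $\alpha\in K$, a \emph{single-step} semi-invariance. Then the same Zariski-density argument you use (and which the paper carries out via the decomposition $q(\cZ\bfu)=\sum_j f_j(\bfu)\bfm_j$ with $f_j\in C[\bfu]$ and $\bfm_j$ linearly independent over $C$) produces a fundamental matrix with $q(\cZ_j)\neq0$ for all $j$; the ideal generated by $q(\cZ_j)$ is then a nonzero $\si$-stable ideal of $\cR$, and $\si$-simplicity of the Picard-Vessiot ring $\cR$ itself -- which holds by definition, zero divisors notwithstanding -- makes $q(\cZ_j)$ a unit; finally each $p_i(\cZ_j)$ divides this unit, hence is a unit. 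So the paper never needs the decomposition $\cR=\cR_0\oplus\dots\oplus\cR_{s-1}$, the cyclic action on idempotents, or any per-component analysis, and it imposes only one non-vanishing condition per column instead of your $sm$ conditions. Your route costs noticeably more machinery for the same conclusion; the one thing it buys is an explicit picture of how the semi-invariance distributes over the components, which the lemma does not require.
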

\begin{proof}
	Let $N$ be a positive integer such that $p_i\mid \sigma^N(p_i)$ for all $1\leq i\leq m$ and let $q=(\prod_{i=1}^mp_i) \sigma(\prod_{i=1}^m p_i)\cdots \sigma^{N-1}(\prod_{i=1}^m p_i)$. Then $\sigma(q)=\alpha q$ for some $\alpha\in K$. We first show the lemma for $q$.
	
	Let $\cZ\in \GL_n(\cR)$ be a fundamental matrix of $\sigma(Y)=AY$. We claim that there exists an $M\in \GL_n(C)$ such that $q((\cZ M)_j)\neq 0$. Let $\bfu=(u_1,\dots,u_n)^t$ be a vector with indeterminate entries. Since $\cZ$ is invertible, as a polynomial in $\cR[\bfu]$, $q(\cZ \bfu)\neq 0$. Write $q(\cZ \bfu)=\sum_{j=1}^d f_{j}(\bfu) \bfm_j$, where $f_{j}(\bfu)\in C[\bfu]$ and $\bfm_1,\dots,\bfm_d\in \cR$ are linearly independent over~$C$. As $q(\cZ\bfu)\neq 0$, at least one of $f_1(\bfu),\dots,f_d(\bfu)$ is not zero, say $ f_{j_1}(\bfu)\neq 0$. Set $U$ to be the Zariski open subset of $C^n$ consisting of all $\bfa$ in $C^n$ such that $f_{j_1}(\bfa)\neq 0$. Then $U\times \dots \times U$ is a non-empty Zariski open subset of $C^{n\times n}$, where the direct product takes $n$ times. Let $M\in U\times \dots \times U$ be such that $\det(M)\neq 0$. Such $M$ exists because $U\times \dots \times U$ is Zariski dense in $C^{n\times n}$. Then for each column $\bfc$ of~$M$, it follows that $f_{j_1}(\bfc)\neq 0$, and thus $q(\cZ \bfc)\neq 0$. Since $M$ is invertible, $\cZ M$ is also a fundamental matrix of $\sigma(Y)=AY$. This proves our claim.
	
	Let $\bfc$ be a column of $M$. Then
	$$
	\sigma(q(\cZ \bfc))=q^\sigma(A\cZ\bfc)=\sigma(q)(\cZ\bfc)=\alpha q(\cZ\bfc)
	$$
	where $q^\sigma$ denotes the polynomial obtained by applying $\sigma$ to the coefficients of $q$. Hence, $q(\cZ\bfc)$ generates a nonzero $\sigma$-ideal in $\cR$. Since $\cR$ is $\sigma$-simple, this ideal must be equal to $\cR$ and thus $q(\cZ\bfc)$ is invertible in $\cR$. Finally, for each $1\leq i\leq m$, since $q(\cZ\bfc)=p_i(\cZ\bfc)h_i$ for some $h_i\in \cR$, $p_i(\cZ\bfc)$ is invertible in $\cR$.
\end{proof}

\begin{lemma}
	\label{lm:dimensionofGaloisgroups}
	Suppose that there exist irreducible special polynomials $p_1,\dots, p_m$ that are pairwise not shift equivalent, satisfying that $\sum_{i=1}^m \ell_i=n$, where $\ell_i$ is the smallest positive integer such that $p_i\mid \sigma^{\ell_i}(p_i)$. Then $\dim(G)=n$.
\end{lemma}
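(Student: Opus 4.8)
The plan is to translate the hypothesis on special polynomials into a statement about the characters of the connected group $H=G^{\circ}$, and then to use the faithful $n$-dimensional action of $H$ on the solution space to pin down its dimension. Throughout I would work inside one component $\cR_0=e_0\cR$, which is the Picard--Vessiot ring of $\sigma^s(Y)=A_{(s)}Y$ and, by the cited triviality of the torsor, satisfies $\dim G=\dim H=\trdeg_K\operatorname{Frac}(\cR_0)$; moreover the units of $\cR_0$ that are $H$-eigenvectors correspond, up to a factor in $K^*$, to the characters in $X(H)$.

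For the lower bound, fix a fundamental matrix $\cZ\in\GL_n(\cR)$ as in Lemma~\ref{lm:goodfundamentalmatrix}. With $N=\lcm(\ell_1,\dots,\ell_m)$, the $n$ polynomials $\sigma^j(p_i)$ ($1\le i\le m$, $0\le j\le\ell_i-1$) are algebraically independent over $K$ by Proposition~\ref{prop:algebraicindependence} and satisfy $\sigma^N(\sigma^j(p_i))=\alpha_{i,j}\,\sigma^j(p_i)$ with $\alpha_{i,j}\in K$. I would evaluate them along a solution $\cZ\bfc$ with $\bfc\in C^n$: since $\sigma(\cZ\bfc)=A\cZ\bfc$, each $s_{i,j}:=\sigma^j(p_i)(\cZ\bfc)$ satisfies $\sigma^N(s_{i,j})=\alpha_{i,j}s_{i,j}$, and choosing $\bfc$ as a column of the matrix produced in Lemma~\ref{lm:goodfundamentalmatrix} makes the $s_{i,j}$ units in $\cR_0$. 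A genericity argument in $\bfc$, exactly as in the proof of Lemma~\ref{lm:goodfundamentalmatrix}, shows that the substitution $t\mapsto\cZ\bfc$ preserves algebraic independence, so the $s_{i,j}$ are $n$ algebraically independent elements of $\operatorname{Frac}(\cR_0)$; hence $\dim H\ge n$. Interpreting each eigenvector unit $s_{i,j}$ as a character $\chi_{i,j}\in X(H)$, their algebraic independence forces the $\chi_{i,j}$ to be multiplicatively independent, so $\operatorname{rank}X(H)\ge n$.

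For the matching upper bound, note that $\operatorname{rank}X(H)$ is bounded by the rank of $H$, which is at most the rank $n$ of $\GL_n$; together with the previous paragraph this shows a maximal torus $T\subseteq H$ has dimension $n$, i.e.\ $T$ is a maximal torus of $\GL_n$. It remains to prove $H=T$. Using the constant vector $\cZ^{-1}(t_0,\dots,t_{n-1})^t$, whose entries are $\sigma$-invariant, each $\sigma^j(p_i)$ becomes, after diagonalising $T$, a $T$-weight vector, that is, a monomial in the weight coordinates; multiplicative independence of the $\chi_{i,j}$ says precisely that the $n\times n$ matrix of these weights is nonsingular over $\bQ$. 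If $H\neq T$, then, being connected of maximal rank, $H$ would contain a root subgroup $U_\alpha$ of $(\GL_n,T)$; as $U_\alpha$ is unipotent, every character is trivial on it, so all $s_{i,j}$ would be $U_\alpha$-invariant. But a $U_\alpha$-invariant weight vector must be independent of one weight coordinate, making the weight matrix singular --- a contradiction. Hence $H=T$ and $\dim G=\dim H=n$.

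The main obstacle is this last step: turning the combinatorial fact that we have exactly $n$ (by Corollary~\ref{cor:numbersofspecialpolynomials}, the maximum possible) algebraically independent \emph{irreducible} special polynomials into the group-theoretic conclusion that $H$ carries no root subgroups. The lower bound $\dim H\ge n$ is essentially automatic from algebraic independence, but having $n$ independent characters is by itself compatible with Borel-type subgroups of dimension larger than $n$; it is the irreducibility of the $p_i$, via the monomial description of the $s_{i,j}$ and the vanishing of characters on unipotent subgroups, that forces $H$ to be a torus and thus gives the exact value $\dim G=n$.
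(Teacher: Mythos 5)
Your overall architecture (evaluate the special polynomials along solution columns via Lemma~\ref{lm:goodfundamentalmatrix}, read the evaluations as $\sigma^{sN}$-eigenvectors and hence as characters of $H=G^\circ$) matches the paper's, but the step you call ``essentially automatic'' is precisely where the proof breaks. You claim that a genericity argument in $\bfc$, ``exactly as in the proof of Lemma~\ref{lm:goodfundamentalmatrix}'', shows that the substitution $t\mapsto\cZ\bfc$ preserves algebraic independence. That lemma's genericity argument yields only \emph{nonvanishing} (hence invertibility) of the evaluations; it cannot yield algebraic independence, and no genericity argument can. Indeed, all the evaluations $s_{i,j}=\sigma^j(p_i)(e_0\cZ\bfc)$ lie in $\cF$, the field of fractions of $\cR_0$, whose transcendence degree over $K$ equals $\dim G^\circ$ --- the very quantity you are trying to bound from below. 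If $\dim G^\circ<n$, then \emph{no} choice of $\bfc\in C^n$ makes $n$ of these evaluations algebraically independent over $K$; evaluation at constant vectors is exactly the kind of substitution that destroys independence. So your claim presupposes the lower bound $\dim G^\circ\geq n$ that it is meant to establish. The missing idea is the paper's transfer argument: since $\sigma^{sN}(s_{i,j})=\alpha_{i,j}s_{i,j}$ with $\alpha_{i,j}\in K$, an algebraic dependence among the $s_{i,j}$ produces, by the difference analogue of the Kolchin--Ostrowski theorem, integers $d_{i,j}$ not all zero and $\beta\in K^*$ with $\prod_{i,j}\alpha_{i,j}^{d_{i,j}}=\sigma^{sN}(\beta)/\beta$; because the polynomials $\sigma^j(p_i)$ themselves satisfy the \emph{same} eigenvalue relations, the element $\prod_{i,j}\sigma^j(p_i)^{d_{i,j}}/\beta$ of $F$ is $\sigma^{sN}$-invariant, hence by Lemma~\ref{lm:constants} equals a constant times~$1$, i.e.\ $\prod_{i,j}\sigma^j(p_i)^{d_{i,j}}=c\beta$ with $c\in C$. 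This contradicts Proposition~\ref{prop:algebraicindependence}. Without this step both halves of your proof collapse, since the lower bound $\operatorname{rank}X(H)\geq n$ and the nonsingularity of your weight matrix both rest on the independence of the $s_{i,j}$.

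Your second half is, modulo that gap, a genuinely different route from the paper's and is essentially sound: you bound $\operatorname{rank}X(H)\leq\operatorname{rank}(H)\leq n$, conclude that $T$ is a maximal torus of $\GL_n(C)$, and rule out $H\supsetneq T$ because a connected overgroup of a maximal torus must contain a root subgroup $U_\alpha$, on which every character is trivial, forcing a zero column in the (nonsingular) weight matrix. This in effect proves the torus statement of Theorem~\ref{thm:systemwithmanyspecialpolynomials} on the way to the dimension count, and if repaired it would need two further justifications: that an $H$-semi-invariant polynomial on the solution space becomes a single monomial in $T$-diagonalizing coordinates (true because distinct monomials have distinct weights under the diagonal torus of $\GL_n$), and that $X(H)\to X(T)$ is injective for connected~$H$. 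The paper's upper bound is much shorter and avoids group theory entirely: since $q_1(e_0\cZ_j),\dots,q_n(e_0\cZ_j)$ are $n$ algebraically independent polynomial expressions in the $n$ entries of the column $e_0\cZ_j$, those entries are algebraic over the subfield $\tilde\cF$ generated by all the $q_i(e_0\cZ_j)$, and the cross-column relations $q_i(e_0\cZ_{j_1})=c_{i,j_1,j_2}\,q_i(e_0\cZ_{j_2})$ supplied by Lemma~\ref{lm:constants} give $\trdeg(\tilde\cF/K)=n$; hence $\trdeg(\cF/K)=n$ directly.
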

\begin{proof}
	It suffices to show that $\dim(G^\circ)=n$.
	Set
	$$
	  q_{\ell_0+\ell_1+\dots+\ell_i+j}=\sigma^{j-1}(p_{i+1})
	$$
	where $0\leq i\leq m-1$, $1\leq j \leq \ell_i$, and $\ell_0=0$. By Lemma~\ref{lm:goodfundamentalmatrix}, there exists a fundamental matrix $\cZ\in\GL_n(\cR) $ of $\sigma(Y)=AY$ such that $q_i(\cZ_j)$ is invertible in $\cR$ for all $1\leq i,j\leq n$, where $\cZ_j$ denotes the $j$th column of $\cZ$.
	Consider $\cR_0$, the Picard-Vessiot ring for $\sigma^s(Y)=A_{(s)}Y$ over~$K$. Let $\cF$ be the field of fractions of $\cR_0$. Since $G^\circ$ is the Galois group of $\sigma^s(Y)=A_{(s)}Y$ over $K$, $\dim(G^\circ)=\trdeg(\cF/K)$. Note that $e_0\cZ$ is a fundamental matrix of $\sigma^s(Y)=A_{(s)}Y$ and $e_0q_i(\cZ_j)=q_i(e_0\cZ_j)$ is invertible in $\cR_0$. Set $N={\rm lcm}(\ell_1,\dots,\ell_m)$. Then $q_i\mid \sigma^{sN}(q_i)$ for all $1\leq i\leq n$. Suppose that $\sigma^{sN}(q_i)=\alpha_i q_i$ with $\alpha_i\in K$. Then for each $1\leq i\leq n$, it holds that $\sigma^{sN}(q_i(e_0\cZ_j))=\alpha_i q_i(e_0\cZ_j)$ for all $1\leq j \leq n$.
	
	We claim that for each $1\leq j \leq n$,  $q_1(e_0\cZ_j),\dots,q_n(e_0\cZ_j)$ are algebraically independent over~$K$. Assume on the contrary that $q_1(e_0\cZ_j),\dots,q_n(e_0\cZ_j)$ are algebraically dependent over $K$. Using an argument similar to that in the proof of Proposition~\ref{prop:algebraicindependence}, we obtain
	integers $d_i$, not all zero, and a nonzero $\beta\in K$ such that
	$$
	\sigma^{sN}\left(\frac{\prod_iq_i^{d_i}}{\beta}\right)=\frac{\prod_iq_i^{d_i}}{\beta}.
	$$
	Due to Lemma~\ref{lm:constants}, $\prod_iq_i^{d_i}=c\beta$ for some $c\in C$.
	In other words, $q_1,\dots,q_n$ are algebraically dependent over~$K$. This contradicts the conclusion of Proposition~\ref{prop:algebraicindependence}. The claim is established.
	
	Now, for $1\leq j_1\neq j_2\leq n$, we have that
	$$\sigma^{sN}(q_i(e_0\cZ_{j_1})/q_i(e_0\cZ_{j_2}))=q_i(e_0\cZ_{j_1})/q_i(e_0\cZ_{j_2}).$$
	By Lemma~\ref{lm:constants} (replacing $\sigma$ with $\sigma^s$), $q_i(e_0\cZ_{j_1})=c_{i,j_1,j_2}q_i(e_0\cZ_{j_2})$ for all $1\leq i\leq n$, where $c_{i,j_1,j_2}\in C$. Denote by $\tilde{\cF}$ the subfield of $\cF$ generated by all $q_i(e_0\cZ_j)$ over $K$. Then the previous discussion implies that
	$
	\trdeg(\tilde{\cF}/K)=n.
	$
	Note that for each $1\leq j \leq n$, every entry of $e_0\cZ_j$ is algebraic over
        $K(q_1(e_0\cZ_j),\dots, q_n(e_0\cZ_j))$
        (and thus algebraic over $\tilde{\cF}$),
  because $q_1(e_0\cZ_j),\dots,q_n(e_0\cZ_j)$ are algebraically independent over $K$ and they are polynomial in the entries of $e_0\cZ_j$. Hence $\cF$ is a finite algebraic extension of $\tilde{\cF}$, as $\cF=K(e_0\cZ)$. So $\trdeg(\cF/K)=n$ and then $\dim(G^\circ)=n$. Consequently, $\dim(G)=n$.
\end{proof}

\begin{theorem}
	\label{thm:systemwithmanyspecialpolynomials}
	Under the same assumption as in Lemma~\ref{lm:dimensionofGaloisgroups}, all irreducible special polynomials are linear in $t_0, t_1,\dots,t_{n-1}$.
\end{theorem}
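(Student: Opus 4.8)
The plan is to exploit the dimension count $\dim(G)=n$ established in Lemma~\ref{lm:dimensionofGaloisgroups}, upgrade it to the statement that $G^\circ$ is an $n$-dimensional torus, deduce that $\sigma^s(Y)=A_{(s)}Y$ is equivalent over $K$ to a diagonal system, and finally read off from that diagonalization $n$ linearly independent \emph{linear} special polynomials. Once these are available, the linearity of an arbitrary irreducible special polynomial follows from Corollary~\ref{cor:sumofspecialpolynomials}.

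First I would show $G^\circ$ is a torus. From the proof of Lemma~\ref{lm:dimensionofGaloisgroups}, the elements $w_i:=q_i(e_0\cZ_1)$ are algebraically independent over $K$, invertible in $\cR_0$, and satisfy $\sigma^{sN}(w_i)=\alpha_i w_i$ with $\alpha_i\in K$; moreover $\tilde\cF=K(w_1,\dots,w_n)$ and $\cF$ is finite algebraic over $\tilde\cF$. Since every $g\in G^\circ$ commutes with $\sigma$ and fixes $\alpha_i\in K$, we get $\sigma^{sN}(g(w_i)/w_i)=g(w_i)/w_i$, so Lemma~\ref{lm:constants} (applied to $\sigma^{sN}$) yields $g(w_i)=\chi_i(g)\,w_i$ for some $\chi_i(g)\in C^\ast$. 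Thus $\chi=(\chi_1,\dots,\chi_n)\colon G^\circ\to(C^\ast)^n$ is a morphism of algebraic groups whose kernel fixes $\tilde\cF$ and hence lies in the finite group $\operatorname{Gal}(\cF/\tilde\cF)$, so $\ker\chi$ is finite. Now $[G^\circ,G^\circ]$ is connected and maps into $[(C^\ast)^n,(C^\ast)^n]=\{1\}$, so it lies in the finite group $\ker\chi$ and is therefore trivial, whence $G^\circ$ is abelian; its unipotent part is connected and maps to $1$ in the torus $(C^\ast)^n$, so it too lies in $\ker\chi$ and is trivial. A connected abelian group with trivial unipotent part is a torus, so $G^\circ$ is a torus, necessarily of dimension $n$.

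Next I would diagonalize the system. The representation of the torus $G^\circ$ on the $n$-dimensional solution space splits into one-dimensional weight spaces, so by the difference Galois correspondence (van der Put--Singer~\cite{vanderPutSinger1997-difference}) the system $\sigma^s(Y)=A_{(s)}Y$ is equivalent over $K$ to a diagonal one: there is $P\in\GL_n(K)$ with $\sigma^s(P)^{-1}A_{(s)}P=\diag(b_1,\dots,b_n)$ and $b_i\in K^\ast$. Applying $P^{-1}$ to the generic solution $T=(t_0,\dots,t_{n-1})^t$, which satisfies $\sigma^s(T)=A_{(s)}T$, and setting $\bar T=P^{-1}T=(\bar t_0,\dots,\bar t_{n-1})^t$, I obtain $\sigma^s(\bar t_i)=b_i\bar t_i$. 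Hence each $\bar t_i$ is a special polynomial that is a $K$-linear form in $t_0,\dots,t_{n-1}$, and the $\bar t_i$ are linearly, hence algebraically, independent over $K$ because $P\in\GL_n(K)$.

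Finally I would conclude. Let $p$ be any irreducible special polynomial; by Lemma~\ref{lm:homogeneous} it is homogeneous. Using $T=P\bar T$ I write $p=\sum_\bfa c_\bfa\,\bar t^\bfa$ as a polynomial in $\bar t_0,\dots,\bar t_{n-1}$ with coefficients in $K$. Each $\bar t_i$ is special, so each monomial $\bar t^\bfa$ is a product of special polynomials and hence special, and distinct monomials are linearly independent over $K$ since the $\bar t_i$ are algebraically independent. As $p$ is special, Corollary~\ref{cor:sumofspecialpolynomials} forces all but one coefficient $c_\bfa$ to vanish, so $p$ equals a single monomial up to a factor in $K^\ast$; irreducibility together with the fact that the $\bar t_i$ are distinct degree-one forms then forces that monomial to be a single $\bar t_i$, so $p$ is linear in $t_0,\dots,t_{n-1}$. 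The main obstacle is the passage to the torus and the translation of its diagonalizability into genuinely linear special forms $\bar t_i$; the reduction of an arbitrary irreducible special polynomial to a single monomial via Corollary~\ref{cor:sumofspecialpolynomials} is then routine.
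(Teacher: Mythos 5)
Your proof is correct and follows the same overall route as the paper's: show that $G^\circ$ is an $n$-dimensional torus, diagonalize $\sigma^s(Y)=A_{(s)}Y$ over $K$ to produce $n$ linear special polynomials $\bar t_0,\dots,\bar t_{n-1}$, then use Corollary~\ref{cor:sumofspecialpolynomials} and irreducibility to force any irreducible special polynomial to be one of the $\bar t_i$ up to a factor in $K$. The genuine difference lies in how the torus property is established. The paper shows that the character group $X(G^\circ)$ has rank exactly $n$, takes a basis of it to build a surjection $G^\circ\to\GL_1(C)^n$ with finite kernel, and then quotes Lemma~B.20 of \cite{Feng-Hrushovskialgorithm} to conclude that the kernel is generated by unipotent elements and hence trivial. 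You instead work with the concrete character tuple $(\chi_1,\dots,\chi_n)$ attached to $w_i=q_i(e_0\cZ_1)$, observe that its kernel fixes $\tilde{\cF}$ pointwise and is therefore finite because $[\cF:\tilde{\cF}]<\infty$, and then kill $[G^\circ,G^\circ]$ and the unipotent part by connectedness; this is more self-contained, avoiding both the character-rank computation and the external reference, at the mild cost of invoking the structure theory of connected commutative algebraic groups. Two points to tighten. First, the diagonalization step: that a torus Galois group makes the system equivalent over $K$ to a diagonal system is \emph{not} a formal consequence of the Galois correspondence alone; it needs a torsor-triviality (Hilbert~90) or descent argument, which is exactly what the paper imports as Theorem~2.1 of \cite{HendriksSinger1999} --- cite that result rather than the correspondence. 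Second, elements of $G^\circ$ commute with $\sigma^s$ acting on $\cR_0$, not with $\sigma$ itself (which permutes the components $\cR_i$); since your argument only uses $\sigma^{sN}=(\sigma^s)^N$, this is merely a cosmetic fix. (Also, homogeneity of $p$ via Lemma~\ref{lm:homogeneous} is not actually needed for the final monomial argument: distinct monomials in the algebraically independent $\bar t_i$ are $K$-linearly independent regardless of degree.)
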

\begin{proof}
	Let $q_i,\alpha_i$ be as in the proof of Lemma~\ref{lm:dimensionofGaloisgroups}.
	We first show that $G^\circ$ is a torus. Due to Lemma~\ref{lm:dimensionofGaloisgroups}, $\dim(G^\circ)=n$. Hence the rank of $X(G^\circ)$, the group of characters of $G^\circ$ (which is a free abelian group), is at most $n$. As $\sigma^{sN}(q_i(e_0\cZ_1))=\alpha_i q_i(e_0\cZ_1)$, for each $g\in G^\circ$, $\sigma^{sN}(g(q_i(e_0\cZ_1)))=\alpha_i g(q_i(e_0\cZ_1))$. Hence $g(q_i(e_0\cZ_1))=\chi_i(g)g(q_i(e_0\cZ_1))$, where $\chi_i(g)\in C$. In other words, $q_i(e_0\cZ_1)$ induces a character $\chi_i\in X(G^\circ)$. Suppose that there are integers $d_i$, not all zero, such that $\prod_i\chi_i^{d_i}=\id$, where $\id$ is the unitary of $X(G^\circ)$. Then for all $g\in G^\circ$,
	\begin{align*}
		g\left(\prod_iq_i(e_0\cZ_1)^{d_i}\right)&=\prod_i\chi_i^{d_i}(g)\prod_iq_i(e_0\cZ_1)^{d_i}=\prod_iq_i(e_0\cZ_1)^{d_i}.
	\end{align*}
	The Galois correspondence (see, for example, Lemma~1.28 on page~20 of \cite{vanderPutSinger1997-difference}) implies that $\prod_iq_i(e_0\cZ_1)^{d_i}\in K$, which contradicts the fact that $q_1(e_0\cZ_1),\dots,q_n(e_0\cZ_1)$ are algebraically independent over~$K$. Therefore the rank of $X(G^\circ)$ is exactly equal to~$n$. Let $\tilde{\chi}_1, \dots,\tilde{\chi}_n$ be a base of $X(G^\circ)$, as a free abelian group. Consider the morphism
	\begin{align*}
		\phi\colon G^\circ &\longrightarrow \GL_1(C)^n \\
		g&\longmapsto (\tilde{\chi}_1(g),\dots,\tilde{\chi}_n(g)).
	\end{align*}
	Then $\phi$ is surjective with finite kernel because $\dim(G^\circ)=n$. Moreover, due to Lemma~B.20 of \cite{Feng-Hrushovskialgorithm}, $\ker(\phi)$ is generated as an algebraic group by all unipotent elements of $G^\circ$.  Note that if $h\in \GL_n(C)$ is unipotent then $h$ is of finite order if and only if $h=I$, the identity matrix.  So $\ker(\phi)=\{I\}$ and $\phi$ is an isomorphism. This proves that $G^\circ$ is a torus.
	
	By Theorem~2.1 of \cite{HendriksSinger1999}, there exists a $T\in \GL_n(K)$ such that
	$$
	\sigma^s(T)A_{(s)}T^{-1}=\diag(b_1,\dots,b_n),
	$$
	where $b_i\in K$.  Set $(\bar{t}_0,\dots,\bar{t}_{n-1})^t=T(t_0,\dots,t_{n-1})^t$. Then $\sigma^s(\bar{t}_i)=b_i\bar{t}_i$ for all $0\leq i \leq n-1$. In other words, $\bar{t}_0,\dots,\bar{t}_{n-1}$ are special polynomials. Finally, using an argument similar to the proof of Proposition~\ref{prop:cfinitecase}, it follows that every irreducible special polynomial is linear in $t_0,t_1,\dots,t_{n-1}$.
\end{proof}

%
%
%

\subsection{The $\si$-Equivalence Problem} \label{SUBSECT:equivalence}
We now present a method for deciding whether two irreducible polynomials $p, q\in R =K[t_0,\dots,t_{r-1}]$ are
$\si$-equivalent or not.

If one of $p$ and $q$ is special, say $p$, then there exists a minimal positive integer $m$ (not greater than $n$ by Corollary~\ref{cor:numbersofspecialpolynomials}) such that $p\mid \si^m(p)$.
To decide whether $p$ and $q$ are $\si$-equivalent, it suffices to check whether $q$ is associate over $K$ to one of elements
in the set $\{p, \si(p), \ldots, \si^{m-1}(p)\}$. It remains to consider the case in which both  $p$ and $q$ are normal.

We now assume that $p,q\in R$ are irreducible and normal in~$R$.
We want to decide whether there exist $i\in\set Z$ and $u\in K\setminus\{0\}$ such that $\sigma^i(p)=uq$.
Observe first that there can be at most one such~$i$. For, if $i,i'$ and $u,u'$ are such that $\sigma^i(p)=uq$ and $\sigma^{i'}(p)=u'q$,
then $\sigma^{i}(p)/\sigma^{i'}(p)=u/u'$, so $p\mid\sigma^{i'-i}(p)$, and since $p$ is not special, we must have $i=i'$.
Observe also that for a given candidate $i\in\set Z$, it is easy to check whether there exists a $u$ with $\sigma^i(p)=uq$.
Therefore, it suffices to determine a finite list of candidates for~$i$.

Let $L,M\in K[S]$ be the (unique) monic minimal order annihilating operators of $p$ and~$q$, respectively. Let $s$ be their
order. Note that $p$ and $q$ cannot be shift equivalent if the orders of $L$ and $M$ are distinct.
Write $L=S^s+\ell_{s-1}S^{s-1}+\cdots+\ell_0$ and $M=S^s+m_{s-1}S^{s-1}+\cdots+m_0$.
By the minimality of the order of $L$ and $M$, we have $\ell_0,m_0\neq0$.

For every $i\in\set Z$, the monic minimal order annihilating operator of $\sigma^i(p)$ is
\[
  L^{(i)}:=S^s+\sigma^i(\ell_{s-1})S^{s-1}+\cdots+\sigma^i(\ell_0),
\]
and for every $u\in K\setminus\{0\}$, the monic minimal order annihilating operator of $\frac1uq$ is
\[
  \frac1{\sigma^s(u)}Mu=S^s+m_{s-1}\frac{\sigma^{s-1}(u)}{\sigma^s(u)}S^{s-1} + \cdots + m_0\frac{u}{\sigma^s(u)}.
\]
A necessary condition for a pair $(i,u)$ to be a solution to the shift equivalence problem is that $L^{(i)}=\frac1{\sigma^s(u)}Mu$.
Therefore, for every such pair we must have
\[
  \frac{\sigma^i(\ell_k)}{m_k} = \frac{\sigma^k(u)}{\sigma^s(u)}
\]
simultaneously for all~$k\in\set\{0,\dots,s\}$.

Observe that $L$ must have at least three terms. If it had only two terms, we would have $L=S^s+\ell_0$.
This means $\sigma^s(p) = -\ell_0 p$, and this is a contradiction to $p$ not being special.
We can therefore assume that $L$ has at least three terms.
We may further assume that the coefficient of $S^k$ in $L$ is nonzero if and only if the coefficient of $S^k$ in $M$
is nonzero, because if this is not the case, then the shift equivalence problem has no solution.

\begin{lemma}
  Under these circumstances, we have
  \begin{equation}\label{eq:elim}
  \sigma^i\Bigl(\frac{\ell_k/\sigma^s(\ell_k)}{\sigma^k(\ell_0)/\sigma^s(\ell_0)}\Bigr) = \frac{m_k/\sigma^s(m_k)}{\sigma^k(m_0)/\sigma^s(m_0)}
  \end{equation}
  for every $k\in\{1,\dots,s-1\}$ such that $\ell_k\neq0$.
\end{lemma}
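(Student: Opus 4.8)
The plan is to treat the coefficient comparison already derived as a system of equations for the unknown $u$ and to eliminate $u$ from it. Recall that a solution pair $(i,u)$ forces $L^{(i)}=\frac1{\sigma^s(u)}Mu$, so comparing the coefficient of $S^k$ gives
\[
  \sigma^i(\ell_k)=m_k\,\frac{\sigma^k(u)}{\sigma^s(u)}\qquad(0\le k\le s).\quad(\star_k)
\]
These relations are meaningful exactly for those $k$ with $m_k\neq0$; by the standing assumption that the support of $L$ matches the support of $M$, this is the same as $\ell_k\neq0$. In particular $(\star_0)$ is available because $\ell_0,m_0\neq0$ by minimality, and the hypothesis that $L$ has at least three terms guarantees there is at least one index $k\in\{1,\dots,s-1\}$ with $\ell_k\neq0$, so that the identity in question is not vacuous.

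Fix such a $k$. First I would push $\sigma^i$ inside the bracket on the left of \eqref{eq:elim}, using that $\sigma$ is an automorphism to commute it past the various $\sigma^j$; the bracket becomes a double fraction built from the four quantities $\sigma^i(\ell_k)$, $\sigma^{i+s}(\ell_k)$, $\sigma^{i+k}(\ell_0)$ and $\sigma^{i+s}(\ell_0)$. Then I would rewrite each of these via $(\star_k)$ and $(\star_0)$, applying the appropriate power of $\sigma$ to the right-hand sides of those relations. Writing $g(j):=\sigma^j(u)$ for brevity, this yields $\sigma^i(\ell_k)=m_k\,g(k)/g(s)$, $\sigma^{i+s}(\ell_k)=\sigma^s(m_k)\,g(s{+}k)/g(2s)$, $\sigma^{i+k}(\ell_0)=\sigma^k(m_0)\,g(k)/g(s{+}k)$, and $\sigma^{i+s}(\ell_0)=\sigma^s(m_0)\,g(s)/g(2s)$.

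The main computational point is then that the $u$-dependence cancels: after substituting these four expressions, both the numerator $\sigma^i(\ell_k)/\sigma^{i+s}(\ell_k)$ and the denominator $\sigma^{i+k}(\ell_0)/\sigma^{i+s}(\ell_0)$ of the double fraction carry the identical factor $g(k)\,g(2s)/\bigl(g(s)\,g(s{+}k)\bigr)$, which therefore drops out, leaving precisely $\frac{m_k/\sigma^s(m_k)}{\sigma^k(m_0)/\sigma^s(m_0)}$, the right-hand side of \eqref{eq:elim}. I do not expect a genuine obstacle beyond careful bookkeeping: the only points requiring attention are that every denominator appearing is nonzero (guaranteed by $\ell_k,\ell_0,m_k,m_0\neq0$ together with $\sigma$ being an automorphism) and that $\sigma^i$ is commuted correctly through the nested shifts. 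The conceptual content is simply that the prescribed combination of coefficients is a cross-ratio in the $\sigma^j(u)$ that is invariant under the unknown $u$, which is exactly what makes \eqref{eq:elim} a usable, $u$-free constraint pinning down finitely many candidate shifts~$i$.
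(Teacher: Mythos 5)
Your proof is correct and is essentially the paper's own argument: both start from the coefficient relations $\sigma^i(\ell_k)/m_k=\sigma^k(u)/\sigma^s(u)$ at indices $0$ and $k$, apply $\sigma^k$ and $\sigma^s$ to them, and eliminate $u$ by cancellation; the paper merely arranges the same computation as a chain of divisions rather than a direct substitution into the left-hand side of \eqref{eq:elim}. Your observation that the surviving $u$-factor $\sigma^k(u)\sigma^{2s}(u)/\bigl(\sigma^s(u)\sigma^{s+k}(u)\bigr)$ appears identically in numerator and denominator is exactly the cancellation the paper exploits.
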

\begin{proof}
From
\[
(a)\quad\frac{\sigma^i(\ell_0)}{m_0}=\frac{u}{\sigma^s(u)}
\quad\text{and}\quad
(b)\quad\frac{\sigma^i(\ell_k)}{m_k}=\frac{\sigma^k(u)}{\sigma^s(u)}
\]
we obtain
\[
 (c)\quad\frac{\sigma^i(\ell_0)}{m_0}\frac{m_k}{\sigma^i(\ell_k)} = \frac u{\sigma^k(u)}
\]
Apply $\sigma^k$ to $(a)$ and $\sigma^r$ to $(c)$ to obtain
\begin{alignat*}1
&(a')\quad\frac{\sigma^i(\sigma^k(\ell_0))}{\sigma^k(m_0)}=\frac{\sigma^k(u)}{\sigma^{s+k}(u)}
\quad\text{and}\\
&(c')\quad\frac{\sigma^i(\sigma^s(\ell_0))}{\sigma^s(m_0)}\frac{\sigma^s(m_k)}{\sigma^i(\sigma^s(\ell_k))} = \frac{\sigma^s(u)}{\sigma^{s+k}(u)}.
\end{alignat*}
Dividing $(a')$ by $(c')$ gives
\[
\frac{\sigma^i(\sigma^k(\ell_0))\sigma^s(m_0)\sigma^i(\sigma^s(\ell_k))}{\sigma^k(m_0)\sigma^i(\sigma^s(\ell_0))\sigma^s(m_k)}
=\frac{\sigma^k(u)}{\sigma^s(u)}.
\]
Finally, divide $(b)$ by this equation to obtain
\[
\frac{\sigma^k(m_0)\sigma^i(\sigma^s(\ell_0))\sigma^s(m_k)\sigma^i(\ell_k)}{\sigma^i(\sigma^k(\ell_0))\sigma^s(m_0)\sigma^i(\sigma^s(\ell_k))m_k}
=1.
\]
The claim follows from here.
\end{proof}

Unless both sides of Equation~\eqref{eq:elim} are constant, we get at most one candidate for $i$ and are done.
It remains to consider the case when both sides are constant for every $k$ with $\ell_k\neq0$ (and $m_k\neq0$).
In this case, the constant can only be~$1$, because $\ell_0,\ell_k,m_0,m_k$ are rational functions and $\sigma$
does not change leading terms.

If both sides of \eqref{eq:elim} are equal to $1$ then
\begin{alignat*}1
  \sigma^s(m_0)M\frac1{m_0}
  &=\sum_{k=0}^s \frac{m_k}{\sigma^k(m_0)/\sigma^s(m_0)}S^k
   =\sum_{k=0}^s \sigma^s(m_k)S^k = M^{(s)}.
\end{alignat*}
Therefore, if $i\in\set Z$ and $u\in K$ are such that $L^{(i)}=\frac1{\sigma^s(u)}Mu$, then
we also have $L^{(i+s)}=\frac1{\sigma^{2s}(u)}M^{(s)}\sigma^s(u)=\frac1{\sigma^s(\sigma^s(u)m_0)}M\sigma^s(u)m_0$.

This means that in terms of operators, the shift equivalence problem may have
more than one solution in the situation under consideration.
In the former cases, where there was at most one $i$
with $L^{(i)}=\frac1{\sigma^s(u)}Mu$, this $i$ is then the only candidate for which we can possibly
have $\sigma^i(p)=uq$. In the present situation, where there are infinitely many $i$'s that solve the
problem on the level of operators, it remains to determine which of them (if any) solves the original
problem in terms of $p$ and~$q$.

\begin{lemma}
  If $\sigma^s(m_0)M=M^{(s)}m_0$, then there is an operator $T\in C[S]$ with constant coefficients
  such that $M$ is a right factor of the symmetric product $T\otimes(S^s - m_0)$.
\end{lemma}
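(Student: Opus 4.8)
The plan is to trade the right-division problem for an inclusion of solution spaces and to produce $T$ by a gauge transformation that turns $M$ into a constant-coefficient operator. First I would fix a solution $h$ of $S^s-m_0$, i.e.\ an element with $\sigma^s(h)=m_0 h$ and $h\neq 0$, inside a Picard--Vessiot extension $\cF$ of $K$ (for $\sigma$) that simultaneously carries a full $s$-dimensional solution space $\sol(M)$ of $M$; since $m_0\neq 0$, such an $h$ is a unit in $\cF$, whose field of constants is $C$. The point of introducing $h$ is that for any $y\in\sol(M)$ the quotient $z:=y/h$ satisfies a new operator obtained by dividing the relation $\sum_k m_k\,\sigma^k(y)=0$ by $\sigma^s(h)=m_0h$; concretely $z$ is annihilated by $\hat M:=\sum_{k=0}^s c_k S^k$ with $c_k:=\dfrac{m_k\,\sigma^k(h)}{m_0\,h}$, and one checks $c_s=c_0=1$.

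The key step is to show $\hat M\in C[S]$, and this is exactly where the hypothesis $\sigma^s(m_0)M=M^{(s)}m_0$ enters. Reading that identity off coefficient by coefficient gives $m_k\,\sigma^s(m_0)=\sigma^s(m_k)\,\sigma^k(m_0)$ for every $k$. Using $\sigma^{s+k}(h)=\sigma^k(m_0)\,\sigma^k(h)$ and $\sigma^s(h)=m_0h$, a short computation then yields $\sigma^s(c_k)=c_k$ for all $k$: applying $\sigma^s$ to $c_k$ produces the factor $\sigma^s(m_k)\,\sigma^k(m_0)$ in the numerator, which the hypothesis replaces by $m_k\,\sigma^s(m_0)$, and everything cancels back to $c_k$. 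Since each $c_k$ is $\sigma^s$-invariant in $\cF$, Lemma~\ref{lm:constants} (applied with $\ell=s$) forces $c_k\in C$, so $T:=\hat M$ is a monic operator with constant coefficients.

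It then remains to convert the gauge identity into the claimed divisibility. By construction every $y\in\sol(M)$ is of the form $y=z\,h$ with $T(z)=0$ and $(S^s-m_0)(h)=0$, so $\sol(M)\subseteq \sol(T)\cdot\sol(S^s-m_0)\subseteq \sol\bigl(T\otimes(S^s-m_0)\bigr)$; the standard correspondence between inclusion of (full-dimensional) solution spaces and right division then shows that $M$ is a right factor of $T\otimes(S^s-m_0)$. I expect the main obstacle to be the bookkeeping needed to guarantee that $h$ and a full solution space of $M$ live in one and the same difference extension with constant field exactly $C$ (so that Lemma~\ref{lm:constants} is applicable); this is handled by taking $\cF$ to be the fraction field of a domain component of the Picard--Vessiot ring of the combined system built from the companion matrix of $M$ and the operator $S^s-m_0$, after enlarging $C$ to its algebraic closure if necessary.
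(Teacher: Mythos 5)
Your gauge-transformation strategy is genuinely different from the paper's argument, and its outer layers (the computation $\sigma^s(c_k)=c_k$ from the coefficientwise identity $m_k\sigma^s(m_0)=\sigma^s(m_k)\sigma^k(m_0)$, and the passage from an inclusion of solution spaces to right division) are fine. The gap is at the central step, the only place the hypothesis is converted into constancy: you invoke Lemma~\ref{lm:constants} to conclude $c_k\in C$. That lemma applies to a $\sigma$-\emph{field} whose field of $\sigma$-constants is algebraically closed, and no such field containing a nonzero $h$ with $\sigma^s(h)=m_0h$ need exist; this is exactly the classical obstruction that forces difference Picard--Vessiot theory to use rings with zero divisors. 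Already for $s=1$, $m_0=-1$ (every first-order operator satisfies the hypothesis), a nonzero $h$ with $\sigma(h)=-h$ cannot lie in any $\sigma$-field with algebraically closed constants: $h^2$ would be constant, so $h$ would be algebraic over the constants, hence constant, hence $0$. Your proposed repair does not rescue this. A domain component $\cR_0$ of the Picard--Vessiot ring is not $\sigma$-stable ($\sigma$ maps $\cR_0$ to $\cR_1$), so neither $\sigma^k(h)$ nor $c_k$ lives in $\operatorname{Frac}(\cR_0)$; and the total ring of fractions of $\cR$, which is $\sigma$-stable and does have constants $C$, is not a field, and there ``$\sigma^s$-periodic'' emphatically does not imply ``constant'': the idempotents $e_i$ are periodic, and in the example above $h=e_0-e_1$ is itself $\sigma^2$-invariant yet nonconstant.

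The failure is concrete, not just a missing citation. Take $M=S^2+m_1S+m_0$ with $m_0,m_1\in C^*$; then $\sigma^2(m_0)M=M^{(2)}m_0$ holds trivially. Build the $\sigma$-field $\cF=K(u,v)$ with $\sigma(u)=v$, $\sigma(v)=m_0u$, and take $h=u$. Then $c_1=m_1\sigma(h)/(m_0h)=(m_1/m_0)\,v/u$ is transcendental over $K$, even though $\sigma^2(c_1)=c_1$; consistently, $\cF$ has new constants (e.g.\ $v/u+m_0u/v$), over which $c_1$ is merely algebraic, so Lemma~\ref{lm:constants} is not applicable to $(\cF,\sigma)$ and its conclusion is false there. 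In sequence language, $\sigma^s(c_k)=c_k$ only says each $c_k$ is an interlacing of $s$ constants, not a constant; making the $c_k$ genuinely constant requires choosing a very special $h$ (here $h=\lambda^x$ with $\lambda^2=m_0$), and proving that such an $h$ exists is essentially the content of the lemma, not bookkeeping. The paper sidesteps all of this by never introducing $h$: the hypothesis makes $q\mapsto\frac1{m_0}\sigma^s(q)$ a $C$-linear map of the at-most-$s$-dimensional solution space of $M$ into itself, so all solutions of $M$ satisfy a single operator supported on powers of $S^s$ whose coefficients are constants times explicit products of shifts of $m_0$, and that operator is, up to an irrelevant left factor, exactly $T\otimes(S^s-m_0)$. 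If you want to keep the gauge-theoretic picture, you would have to run it componentwise in the total Picard--Vessiot ring and then still resolve the interlacing ambiguity---which is precisely what the paper's linear-algebra argument dissolves.
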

\begin{proof}
The condition $\sigma^s(m_0)M=M^{(s)}m_0$ means that for any solution $q$ of~$M$, also $\frac1{m_0}\sigma^s(q)$
is a solution of~$M$. But the solutions of $M$ form a $C$-vector space of dimension at most~$s$, so for every
solution $q$ of~$M$, the elements
\[
q,
\ \frac1{m_0}\sigma^s(q),
\ \frac1{m_0\sigma^s(m_0)}\sigma^{2s}(q),
\ \dots,
\ \biggl(\prod_{i=0}^{s-1}\frac1{\sigma^{si}(m_0)}\biggr)\sigma^{s^2}(q)
\]
are linearly dependent over~$C$.

Therefore, every solution of $M$ is also a solution of an operator of the form
\begin{alignat*}1
&S^{s^2} + c_{s(s-1)}\sigma^{s(s-1)}(m_0)S^{s(s-1)}
  + \cdots\\
&\cdots
+c_1\biggl(\prod_{i=1}^{s-1}\sigma^{si}(m_0)\biggr)S^s
+c_0\biggl(\prod_{i=0}^{s-1}\sigma^{si}(m_0)\biggr)
\end{alignat*}
for certain constants $c_0,c_s,\dots,c_{s(s-1)}$.

This operator can be factored as a symmetric product. Up to (irrelevant)
left-multiplication by an element of~$K$, it is equal to
\[
(S^{s^2} + c_{s(s-1)}S^{s(s-1)}+\cdots+c_1S^s+c_0)\otimes(S^s-m_0).
\]
This completes the proof.
\end{proof}

This means that every solution of~$M$, in particular~$q$, can be interpreted
as a product of a C-finite and an $s$-hypergeometric quantity, i.e., a quantity annihilated by an operator of the form $S^s-r$ for some rational function~$r$. (We do not claim
that these factors are elements of~$R$.)

We can reason analogously for $L$ and find that every solution of~$L$,
in particular~$p$, can be interpreted as a product of a C-finite and
an $s$-hypergeometric quantity, with the $s$-hypergeometric part annihilated by $S^s-\ell_0$.

If the $s$-hypergeometric factors are not also C-finite, then their comparison leads to at most
one candidate $i\in\set Z$ such that $\sigma^i(p)/q\in K$. In this comparison,
we must take into account that in the factorization of $p$ and $q$ into a C-finite
and an $s$-hypergeometric part, exponential terms $\lambda^x$ and polynomials in $x$ can be
freely moved from one factor to the other.

For the comparison, we use the Gosper-Petkov\v sek form~\cite{PWZbook1996} of $\ell_0$ and~$m_0$:
\[
\ell_0=\lambda\frac{\sigma(a)}{a}\frac{b}{c},\qquad
m_0=\tilde\lambda\frac{\sigma(\tilde a)}{\tilde a}\frac{\tilde b}{\tilde c}.
\]
We ignore $\lambda,\tilde\lambda,a,\tilde a$, as they correspond to the exponential and polynomial
part, respectively, and check if there is an $i\in\set Z$ such that $\sigma^{i}(b/c)=\tilde b/\tilde c$.
If so, then this $i$ is the only candidate for which we may have $\sigma^{i}(p)/q\in K$.

It remains to consider the case when $p$ and $q$ both are C-finite.
By Theorem 4.1 in \cite{stanley11}, there exist pairwise distinct $\lambda_1,\ldots,\lambda_t\in C$,
pairwise distinct $\mu_1,\ldots,\mu_{t'}\in C$ and polynomials $a_1, \ldots, a_t, b_1,\ldots,b_{t'}\in C[x]$ such that
\[
p=a_1(n)\lambda_1^n+\cdots +a_t(n)\lambda_t^n
\]
\[q=b_1(n)\mu_1^n+\cdots b_{t'}(n)\mu_{t'}^n,
\]
The requirement $\sigma^i(p)=uq$ translates into
\[
\sigma^i(a_1)\lambda_1^{i+n}+\cdots +\sigma(a_s)\lambda_s^{i+n}=ub_1\mu_1^n+\cdots+ub_t\mu_t^n.
\]
There is no solution unless $\{\lambda_1,\dots,\lambda_t\}=\{\mu_1,\dots,\mu_{t'}\}$, so we may assume that $t=t'$
and $\lambda_k=\mu_k$ for all $1\leq k\leq t$. Then we need
\[
\lambda_k^i\sigma^i(a_k)=ub_k
\]
for all~$k$. From any two such equations, say the $k$th and the $\ell$th, we get the constraint
\[
\left(\frac{\lambda_k}{\lambda_\ell}\right)^i\sigma^i\left(\frac{a_k}{a_\ell}\right)=\frac{b_k}{b_\ell}
\]
If $a_k/a_\ell$ is not a constant or $\lambda_k/\lambda_\ell$ is not a root of unity, then there is at most one solution~$i$.
If $a_k/a_\ell$ is a constant and $\lambda_k/\lambda_\ell$ is a root of unity for every choice of $k$ and~$\ell$,
then $p$ can be written as a product of $a_1$ and $\lambda_1^x$ and a $C$-linear combination of powers of roots of unity.
This implies that $p/a_1$ is a special polynomial, which conflicts with the assumption that $p$ is normal.

In conclusion, we have proven the correctness of the following algorithm.

\begin{algorithm}
  INPUT: $p,q\in R=C(x)[t_0,\dots,t_{r-1}]$ irreducible and normal\\
  OUTPUT: $i\in\set Z$ such that either $\sigma^i(p)/q\in C(x)$ or $p$ and $q$ are not shift-equivalent.

  \step11 Compute monic minimal annihilating operators $L,M\in C(x)[S]$ of $p$ and~$q$.
  \step21 If there is a $k$ such that the coefficient of $S^k$ is zero in one of the two operators
  but nonzero in the other, return~$0$.
  \step31 Let $s$ be the order of \,$L$ (and~$M$).
  \step41 For every $k\in\{1,\dots,s-1\}$ with $\ell_k\neq0$, do:
  \step52 If at least one of the two rational functions $a:=\frac{\ell_k/\sigma^s(\ell_k)}{\sigma^k(\ell_0)/\sigma^s(\ell_0)}$,
  $b:=\frac{m_k/\sigma^s(m_k)}{\sigma^k(m_0)/\sigma^s(m_0)}$ is not in $C$
  \step63 Return $i\in\set Z$ such that $\sigma^i(a)=b$, or $0$ if no such $i$ exists
  \step71 Compute the Gosper-Petkov\v sek form $\lambda\frac{\sigma(a)}{a}\frac{b}{c}$ of $\ell_0$ and
  the Gosper-Petkov\v sek form $\tilde\lambda\frac{\sigma(\tilde a)}{\tilde a}\frac{\tilde b}{\tilde c}$ of $m_0$.
  \step81 If $b/c\neq1$ or $\tilde b/\tilde c\neq1$ then
  \step92 Return $i\in\set Z$ such that $\sigma^i(b/c)=\tilde b/\tilde c$, or $0$ if no such $i$ exists
  \step{10}1 Compute the constants $\lambda_1,\dots,\lambda_s\in C$, polynomials $a_1,\dots,a_s, \\b_1,\ldots, b_s\in C[x]$, as above.
  \step{11}1 For $k=1,\dots,s$, do:
  \step{12}2 For $\ell=1,\dots,k-1$, do:
  \step{13}3 If $a_k/a_\ell$ is not a constant or $\lambda_k/\lambda_\ell$ is not a root of unity then
  \step{14}4 Return $i\in\set Z$ such that $(\frac{\lambda_k}{\lambda_\ell})^i\sigma^i(\frac{a_k}{a_\ell})=\frac{b_k}{b_\ell}$, or $0$ if no such $i$ exists.
\end{algorithm}


\section{The C-finite case}\label{sec:cfinite}
The problem of indefinite summation in the C-finite case has been investigated in~\cite{DuWei2024, HouWei2024, Wei2024} under specific assumptions. Let $A$ be defined as in \eqref{eqn:differencesystem} and assume that $A\in \GL_n(C)$. In \cite{HouWei2024}, the authors introduce a method for computing rational solutions of the equation $u\sigma(y)-vy=w$, where $u,v,w\in R$, under the assumption that $n=2$ and $A$ has two eigenvalues $\lambda_1,\lambda_2$ such that $\lambda_1/\lambda_2$ is not a root of unity. However, their method is not complete as they are unable to bound the multiplicities of irreducible special polynomials appearing in the denominator of solutions.
In \cite{DuWei2024}, assuming that $A$ is a diagonalizable matrix, the authors characterize all new constants in $F$ and present an algorithm for computing one rational solution of the equation $c\sigma(y)-y=f$, where $c\in C^*$ and $f\in F$. According to the proof of Proposition~\ref{prop:cfinitecase}, if $F$ contains no new constants, then $A$ will always be diagonalizable, and thus the C-finite case under our assumption is reduced to the case considered in \cite{DuWei2024}.

\medskip
\noindent {\bf Acknowledgement.}
The authors thank Prof. Michael Singer for many discussions about summation in finite terms and also thank the referees for their careful reading and their valuable suggestions. 


\bibliographystyle{plain}
\balance
\bibliography{integral}

\begin{thebibliography}{10}

\bibitem{Abramov1971}
Sergei~A. Abramov.
\newblock The summation of rational functions.
\newblock {\em \v{Z}. Vy\v{c}isl. Mat i Mat. Fiz.}, 11:1071--1075, 1971.

\bibitem{Abramov1975}
Sergei~A. Abramov.
\newblock The rational component of the solution of a first order linear
  recurrence relation with rational right hand side.
\newblock {\em \v Z. Vy\v cisl. Mat. i Mat. Fiz.}, 15(4):1035--1039, 1090,
  1975.

\bibitem{Abramov1995b}
Sergei~A. Abramov.
\newblock Indefinite sums of rational functions.
\newblock In {\em Proc. ISSAC '95}, pages 303--308, New York, NY, USA, 1995.
  ACM.

\bibitem{AbramovHoeij1999}
Sergei~A. Abramov and Mark~Van Hoeij.
\newblock Integration of solutions of linear functional equations.
\newblock {\em Integral Transforms and Special Functions}, 8(1-2):3--12, 1999.

\bibitem{AbramovHoeij1997}
Sergei~A. Abramov and Mark van Hoeij.
\newblock A method for the integration of solutions of ore equations.
\newblock In {\em Proc. ISSAC '97}, pages 172--175, New York, NY, USA, 1997.
  ACM.

\bibitem{Boettner2010}
Stefan~T. Boettner.
\newblock {\em Mixed Transcendental and Algebraic Extensions for the
  Risch-Norman Algorithm}.
\newblock Phd thesis, Tulane University, New Orleans, USA, 2010.

\bibitem{bronstein00}
Manuel Bronstein.
\newblock On solutions of linear ordinary difference equations in their
  coefficient field.
\newblock {\em J. Symbolic Comput.}, 29:841--877, 2000.

\bibitem{BronsteinBook}
Manuel Bronstein.
\newblock {\em {S}ymbolic {I}ntegration {I}: {T}ranscendental {F}unctions}.
\newblock Springer-Verlag, Berlin, 2005.

\bibitem{Bronstein2007}
Manuel Bronstein.
\newblock Structure theorems for parallel integration.
\newblock {\em J. Symbolic Comput.}, 42(7):757--769, 2007.

\bibitem{chen2021ISSACb}
Shaoshi Chen, Ruyong Feng, Pingchuan Ma, and Michael~F. Singer.
\newblock Separability problems in creative telescoping.
\newblock In {\em Proc. ISSAC '21}, pages 83--90, New York, NY, USA, 2021. ACM.

\bibitem{LambertW}
R.~M. Corless, G.~H. Gonnet, D.~E.~G. Hare, D.~J. Jeffrey, and D.~E. Knuth.
\newblock On the {L}ambert {$W$} function.
\newblock {\em Adv. Comput. Math.}, 5(4):329--359, 1996.

\bibitem{Davenport1982b}
James~H. Davenport.
\newblock On the parallel {R}isch algorithm ({III}): use of tangents.
\newblock {\em SIGSAM Bull.}, 16(3):3–6, aug 1982.

\bibitem{Davenport1982a}
James~H. Davenport.
\newblock The parallel {R}isch algorithm. {I}.
\newblock In {\em Computer algebra ({M}arseille, 1982)}, volume 144 of {\em
  Lecture Notes in Comput. Sci.}, pages 144--157. Springer, Berlin-New York,
  1982.

\bibitem{DavenportTrager1985}
James.~H. Davenport and Barry.~M. Trager.
\newblock On the parallel {R}isch algorithm. {II}.
\newblock {\em ACM Trans. Math. Software}, 11(4):356--362, 1985.

\bibitem{DuRaab2023}
Hao Du and Clemens~G. Raab.
\newblock Complete reduction systems for {A}iry functions.
\newblock {\em Bulletin of Chinese Applied Mathematics}, 1(1):10--21, 2023.

\bibitem{DuWei2024}
Lixin Du and Yarong Wei.
\newblock Solutions to the first order difference equations in the multivariate
  difference field, 2024. arXiv:2401.13933.

\bibitem{Elaydi2005}
Saber Elaydi.
\newblock {\em An introduction to difference equations}.
\newblock Undergraduate Texts in Mathematics. Springer, New York, third
  edition, 2005.

\bibitem{Feng-Hrushovskialgorithm}
Ruyong Feng.
\newblock Hrushovski's algorithm for computing the {G}alois group of a linear
  differential equation.
\newblock {\em Adv. in Appl. Math.}, 65:1--37, 2015.

\bibitem{Fitch1981}
John Fitch.
\newblock User-based integration software.
\newblock In {\em Proc. SYMSAC '81}, page 245–248, New York, NY, USA, 1981.
  ACM.

\bibitem{GeddesStefanus1989}
Keith.~O. Geddes and Lim.~Y. Stefanus.
\newblock On the {R}isch-{N}orman integration method and its implementation in
  maple.
\newblock In {\em Proc. ISSAC '89}, page 212–217, New York, NY, USA, 1989.
  ACM.

\bibitem{Gosper1978}
Ralph~William Gosper, Jr.
\newblock Decision procedure for indefinite hypergeometric summation.
\newblock {\em Proceedings of the National Academy of Sciences}, 75(1):40--42,
  1978.

\bibitem{Hardouin2008-hypertranscendance}
Charlotte Hardouin.
\newblock Hypertranscendance des syst\`emes aux diff\'{e}rences diagonaux.
\newblock {\em Compos. Math.}, 144(3):565--581, 2008.

\bibitem{Harrington1979}
Steven.~J. Harrington.
\newblock {A new symbolic integration system in reduce*}.
\newblock {\em The Computer Journal}, 22(2):127--131, 01 1979.

\bibitem{HendriksSinger1999}
Peter~A. Hendriks and Michael~F. Singer.
\newblock Solving difference equations in finite terms.
\newblock {\em J. Symbolic Comput.}, 27(3):239--259, 1999.

\bibitem{HouWei2024}
Qing-Hu Hou and Yarong Wei.
\newblock Rational solutions to the first order difference equations in the
  bivariate difference field, 2024. arXiv:2401.11387.

\bibitem{Karr1981}
Michael Karr.
\newblock Summation in finite terms.
\newblock {\em J. Assoc. Comput. Mach.}, 28(2):305--350, 1981.

\bibitem{Karr81}
Michael Karr.
\newblock Summation in finite terms.
\newblock {\em Journal of the ACM}, 28:305--350, 1981.

\bibitem{Karr1985}
Michael Karr.
\newblock Theory of summation in finite terms.
\newblock {\em J. Symbolic Comput.}, 1(3):303--315, 1985.

\bibitem{Kauers2023}
Manuel Kauers.
\newblock {\em {D}-{F}inite {F}unctions}.
\newblock Springer-Verlag, Cham, 2023.

\bibitem{kauers10j}
Manuel Kauers and Peter Paule.
\newblock {\em The Concrete Tetrahedron}.
\newblock Springer, 2011.

\bibitem{Kauers08}
Manuel Kauers and Burkhard Zimmermann.
\newblock Computing the algebraic relations of {C}-finite sequences and
  multisequences.
\newblock {\em J. Symbolic Comput.}, 43(11):787--803, 2008.

\bibitem{Levin2008}
Alexander Levin.
\newblock {\em Difference {A}lgebra}, volume~8 of {\em Algebra and
  Applications}.
\newblock Springer, New York, 2008.

\bibitem{Norman1977}
Arthur~C Norman and PMA Moore.
\newblock Implementing the new {R}isch integration algorithm.
\newblock In {\em Proceedings of the 4th International Colloquium on Advanced
  Computing Methods in Theoretical Physics}, pages 99--110, 1977.

\bibitem{Ogawara2017}
Hiroshi Ogawara.
\newblock On criteria for algebraic independence of collections of functions
  satisfying algebraic difference relations.
\newblock {\em Opuscula Math.}, 37(3):457--472, 2017.

\bibitem{Petkovsek1998}
Marko Petkov{\v{s}}ek.
\newblock Hypergeometric solutions of linear recurrences with polynomial
  coefficients.
\newblock {\em J. Symbolic Comput.}, 14(2-3):243--264, 1992.

\bibitem{PWZbook1996}
Marko Petkov{\v{s}}ek, Herbert~S. Wilf, and Doron Zeilberger.
\newblock {\em {$A=B$}}.
\newblock A. K. Peters Ltd., Wellesley, MA, 1996.
\newblock With a foreword by Donald E. Knuth.

\bibitem{Raab2013}
Clemens~G. Raab.
\newblock Generalization of {R}isch's algorithm to special functions.
\newblock In {\em Computer algebra in quantum field theory}, pages 285--304.
  Springer, Vienna, 2013.

\bibitem{schneider01}
Carsten Schneider.
\newblock {\em Symbolic Summation in Difference Fields}.
\newblock PhD thesis, RISC-Linz, Johannes Kepler Universit{\"a}t Linz, 2001.

\bibitem{schneider04c}
Carsten Schneider.
\newblock A collection of denominator bounds to solve parameterized linear
  difference equations in {$\Pi\Sigma$}-extensions.
\newblock In {\em Proc. SYNASC'04}, pages 269--282, 2004.

\bibitem{schneider05c}
Carsten Schneider.
\newblock Degree bounds to find polynomial solutions of parameterized linear
  difference equations in {$\Pi\Sigma$}-fields.
\newblock {\em Applicable Algebra in Engeneering, Communication and Computing},
  16(1):1--32, 2005.

\bibitem{schneider08}
Carsten Schneider.
\newblock A refined difference field theory for symbolic summation.
\newblock {\em J. Symbolic Comput}, 43:611--644, 2008.

\bibitem{Schneider2016}
Carsten Schneider.
\newblock Symbolic summation in difference rings and applications.
\newblock In {\em Proc. ISSAC '16}, page 9–12, New York, NY, USA, 2016.
  Association for Computing Machinery.

\bibitem{schneider16}
Carsten Schneider and Johannes Bl{\"u}mlein.
\newblock {\em Computer Algebra in Quantum Field Theory}.
\newblock Springer, 2016.

\bibitem{stanley1980}
Richard~P. Stanley.
\newblock Differentiably finite power series.
\newblock {\em European Journal of Combinatorics}, 1(2):175--188, 1980.

\bibitem{Stanley1999}
Richard~P. Stanley.
\newblock {\em Enumerative {C}ombinatorics: Volume 2}, volume~62 of {\em
  Cambridge Studies in Advanced Mathematics}.
\newblock Cambridge University Press, 1999.

\bibitem{stanley11}
Richard~P. Stanley.
\newblock {\em Enumerative Combinatorics: Volume 1}, volume~49 of {\em
  Cambridge Studies in Advanced Mathematics}.
\newblock Cambridge University Press, USA, 2nd edition, 2011.

\bibitem{vanderPutSinger1997-difference}
Marius van~der Put and Michael~F. Singer.
\newblock {\em Galois theory of difference equations}, volume 1666 of {\em
  Lecture Notes in Mathematics}.
\newblock Springer-Verlag, Berlin, 1997.

\bibitem{Wei2024}
Yarong Wei.
\newblock Polynomial solutions to the first order difference equations in the
  bivariate difference field, 2024. arXiv:2401.11388.

\bibitem{zeilberger90}
Doron Zeilberger.
\newblock A holonomic systems approach to special function identities.
\newblock {\em Journal of Computational and Applied Mathematics}, 32:321--368,
  1990.

\bibitem{Zeilberger1991}
Doron Zeilberger.
\newblock The method of creative telescoping.
\newblock {\em J. Symbolic Comput.}, 11(3):195--204, 1991.

\bibitem{pmint}
\url{https://www-sop.inria.fr/cafe/Manuel.Bronstein/pmint/}, 2005.

\end{thebibliography}

\end{document}